\DeclareMathOperator{\im}{im}
\DeclareMathOperator{\Lie}{Lie}
\DeclareMathOperator{\Hom}{Hom}
\DeclareMathOperator{\tr}{Tr}
\DeclareMathOperator{\Diff}{Diff}
\DeclareMathOperator{\SO}{SO}
\DeclareMathOperator{\U}{U}
\DeclareMathOperator{\Rm}{Rm}
\DeclareMathOperator{\Ric}{Ric}
\DeclareMathOperator{\Vect}{Vect}
\newcommand{\R}{\mathbb R}
\newcommand{\C}{\mathbb C}
\newcommand{\N}{\mathbb N}
\newcommand{\diff}{\mathrm{d}}
\newcommand{\del}{\partial}
\newcommand{\Id}{\text{Id}}
\newcommand{\dvol}{\mathrm{dvol}}
\newcommand{\zs}{\prescript{0}{}\sigma}
\newcommand{\G}{\mathcal{G}}
\newcommand{\so}{\mathfrak{so}}
\renewcommand{\div}{\text{\rm div}}
\renewcommand{\H}{\mathbb H}
\renewcommand{\u}{\mathfrak{u}}
\renewcommand{\L}{\mathcal{L}}
\theoremstyle{plain}
	\newtheorem{theorem}{Theorem}
	\newtheorem{proposition}[theorem]{Proposition}
	\newtheorem{lemma}[theorem]{Lemma}
	\newtheorem{corollary}[theorem]{Corollary}
\theoremstyle{definition}
	\newtheorem{definition}[theorem]{Definition}
	\newtheorem{remark}[theorem]{Remark}
	\newtheorem{remarks}[theorem]{Remarks}
\theoremstyle{plain}
	\newtheorem*{theorem*}{Theorem}
	\newtheorem*{proposition*}{Proposition}
	\newtheorem*{lemma*}{Lemma}
	\newtheorem*{corollary*}{Corollary}
	\newtheorem*{conjecture*}{Conjecture}
	\newtheorem*{inductive-hypotheses}{Inductive Hypotheses}
	\newtheorem*{base-step}{Base Step}
	\newtheorem*{inductive-step}{Inductive Step}
	\newtheorem*{final-step}{Final Step}
\theoremstyle{definition}
	\newtheorem*{definition*}{Definition}
	\newtheorem*{remark*}{Remark}
	\newtheorem*{remarks*}{Remarks}
\def\blfootnote{\xdef\@thefnmark{}\@footnotetext}
\numberwithin{equation}{section}
\numberwithin{theorem}{section}
\begin{document}

\title{Non-degeneracy of Poincare--Einstein four-manifolds satisfying a chiral curvature inequality}

\author{Joel Fine}

\date{ }

\maketitle

\abstract{A Poincaré--Einstein metric $g$ is called \emph{non-degenerate} if there are no non-zero infinitesimal Einstein deformations of $g$, in Bianchi gauge, that lie in $L^2$. We prove that a 4-dimensional Poincaré--Einstein metric is non-degenerate if it satisfies a certain chiral curvature inequality. Write $\Rm_+$ for the part of the curvature operator of $g$ which acts on self-dual 2-forms. We prove that if $\Rm_+$ is negative definite then $g$ is non-degenerate. This is a chiral generalisation of a result due to Biquard and Lee, that a Poincaré--Einstein metric of negative sectional curvature is non-degenerate.}


\section{Introduction}\label{introduction}

Let $\overline{M}$ be a compact $(n+1)$-manifold with boundary $X$ and interior $M$ and let $\rho \colon \overline{M} \to [0,\infty)$ be a boundary defining function (so that $\rho^{-1}(0) = X$ and $\rho$ vanishes transversely there). A Riemannian metric $g$ on $M$ is called \emph{conformally compact} if $\rho^2 g$ extends to a smooth metric on $\overline{M}$. If, in addition, $\Ric(g) = -ng$  then $g$ is called \emph{Poincaré--Einstein}. The prototype is the Poincaré model of hyperbolic space, on the unit ball $\overline{B}^{n+1} = \{ x \in \R^{n+1} : |x| \leq 1\}$ with metric $g = \rho^{-2}\diff x^2$ where $\rho = \frac{1}{2}(1-|x|^2)$.

Poincaré--Einstein metrics were introduced in the mathematics literature by Fefferman and Graham \cite{Fefferman-Graham}, where they have proved invaluable in conformal geometry. It was subsequently realised, following the work of Maldacena \cite{Maldacena} on holography, that  they are also significant in theoretical physics. Since these seminal works, Poinacré--Einstein metrics have been the subject of a great deal of research, in both the  mathematical and physical communities. 

A Poincaré--Einstein metric $g$ is called \emph{non-degenerate} if, once one gauge fixes for the action of diffeomorphisms, there are no non-zero infinitesimal Einstein deformations of $g$ which are in $L^2$. (The precise definition is given below; see Definition~\ref{definition-non-degeneracy}.) The main result of this article is that when a four-dimensional Poincaré--Einstein metric satisfies a certain pointwise curvature inequality, then $g$ is automatically non-degenerate. We will give the inequality shortly, but first we explain the geometric importance of non-degeneracy.

A central question in the study of Poincaré--Einstein metrics is the so-called Dirichlet problem. The metric $\rho^2g|_X$ on $X$ depends on the choice of boundary defining function $\rho$ but the conformal class of $\rho^2g|_X$ is uniquely determined by~$g$; we denote it by $c(g)$ and call it the \emph{conformal infinity of~$g$}. For example, the conformal infinity of hyperbolic space $\H^{n+1}$ is the standard conformal structure on $S^n$. Given a conformal structure $\gamma$ on $X$, the Dirichlet problem is to find a Poincaré--Einstein $g$ on $M$ with $c(g) = \gamma$. 

One of the central results in this direction, due in various forms to Graham and Lee~\cite{Graham-Lee}, Lee~\cite{Lee} and Biquard~\cite{Biquard}, is that if $g$ is a non-degenerate Poincaré--Einstein metric, then the \emph{local} Dirichlet problem always has a unique solution: if $\gamma$ is a conformal structure on $X$ which is sufficiently close to $c(g)$ then there is a Poincaré--Einsten metric $\tilde{g}$ with $c(\tilde{g})=\gamma$; moreover, $\tilde{g}$ is the unique solution modulo diffeormorphisms which is close to $g$. Graham and Lee \cite{Graham-Lee} showed that the hyperbolic metric on $\H^{n+1}$ is non-degenerate. This was followed by Biquard \cite{Biquard} and Lee \cite{Lee} who showed that any Poincaré--Einstein metric of non-positive sectional curvature is automatically non-degenerate.
 
As we have mentioned, the main result of this article is to give a different curvature inequality, particular to $\dim M=4$, which implies non-degeneracy. To state the inequality, let $(M,g)$ be an oriented Poincaré--Einstein 4-manifold. The bundle of 2-forms splits $\Lambda^2 = \Lambda^+\oplus \Lambda^-$ into $\pm1$-eigenbundles for the Hodge star. The curvature operator $\Rm \colon \Lambda^2 \to \Lambda^2$ splits $\Rm = \Rm_+ + \Rm_-$ where $\Rm_+$ and $\Rm_-$ are self-adjoint endomorphism of $\Lambda^+$ and $\Lambda^-$ respectively. (The fact that there is no part of $\Rm$ which swaps self-dual and anti-self-dual forms is equivalent to $g$ being Einstein.) The main result of this article is the following:

\begin{theorem}\label{main-result}
If $(M,g)$ is an oriented Poincaré--Einstein 4-manifold and either $\Rm_+ < 0$ or $\Rm_- < 0$ then $g$ is non-degenerate.  
\end{theorem}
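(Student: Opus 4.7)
The plan is a Bochner argument tailored to the chiral splitting on oriented four-manifolds. I would reduce first to the statement that any $h \in L^2(S^2 T^*M)$ satisfying the linearised Einstein equation in Bianchi gauge must vanish, and then, after passing to a trace-free, divergence-free representative using the standard reduction on Einstein backgrounds, I would exploit the Lichnerowicz-type equation
\begin{equation*}
\nabla^*\nabla h \;=\; 2\,\mathring{\Rm}\,h \;-\; 6\,h,
\end{equation*}
where $\mathring{\Rm}$ denotes the natural action of the Riemann tensor on $S^2_0 T^*M$. The case $\Rm_- < 0$ is obtained from the case $\Rm_+ < 0$ by reversing orientation, which interchanges $\Rm_+$ and $\Rm_-$, so it is enough to treat $\Rm_+ < 0$.

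The key structural input is the isomorphism $S^2_0 T^*M \cong \Lambda^+ \otimes \Lambda^-$, valid on any oriented Riemannian 4-manifold. Under this isomorphism the Levi-Civita connection factors as the tensor product of the induced connections on $\Lambda^+$ and on $\Lambda^-$; crucially, the curvatures of these two factors are $\Rm_+$ and $\Rm_-$ respectively, so $\mathring{\Rm}$ splits additively into a piece acting on the $\Lambda^+$ factor through $\Rm_+$ and a piece acting on the $\Lambda^-$ factor through $\Rm_-$. Decomposing $T^*M \otimes \Lambda^+ \otimes \Lambda^-$ into irreducibles under $\SO(4)=\SU(2)\times\SU(2)/\Z_2$, I would single out the ``twistor-like'' projection $D h$ of $\nabla h$ onto the summand that is irreducible with respect to the $\Lambda^-$ factor alone, and compute that its Weitzenböck formula takes the schematic shape
\begin{equation*}
D^* D \;=\; a\,\nabla^*\nabla \;+\; b\,\Rm_- \;+\; c\cdot\Id,
\end{equation*}
in which $\Rm_+$ does not appear. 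Substituting the linearised Einstein equation to eliminate $\nabla^*\nabla h$ from $\langle D^*Dh,h\rangle$ and integrating over $M$ then produces an identity of the form
\begin{equation*}
\int_M |Dh|^2\,\dvol \;=\; -\,2a\int_M \langle \Rm_+\, h, h\rangle\,\dvol \;+\; (\text{positive lower-order terms}),
\end{equation*}
because the $\Rm_-$ contributions can be arranged to cancel by choosing the coefficients in the projection. When $\Rm_+ < 0$ pointwise, the right-hand side is a sum of non-negative terms, while the left-hand side is obviously non-negative; matching signs forces $h \equiv 0$.

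The integration by parts at infinity needs justification since $M$ is non-compact: one truncates along level sets $\{\rho = \epsilon\}$ and must verify that the boundary contribution tends to zero as $\epsilon \to 0$. Here I would invoke the standard regularity and indicial-root analysis for Einstein deformations on asymptotically hyperbolic backgrounds, as developed by Graham--Lee and Lee, which together with the $L^2$ hypothesis yields enough polynomial decay of $h$ and $\nabla h$ to kill all boundary terms.

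The principal obstacle is the purely algebraic step of identifying the correct ``twistor-like'' operator $D$ and checking that its Weitzenböck formula involves only $\Rm_-$. This is a representation-theoretic calculation on tensor products of the half-spinor bundles, and the cancellation of the $\Rm_+$-piece is a delicate identity specific to Einstein metrics in four dimensions; it is the chiral replacement of the sectional-curvature estimate used by Biquard and Lee, and isolating it is the main technical point.
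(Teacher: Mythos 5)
Your plan is a genuinely different strategy from the paper's. You propose to work directly with the Lichnerowicz operator on $S^2_0 T^*M$, split $S^2_0 T^*M \cong \Lambda^+\otimes\Lambda^-$, and find a Stein--Weiss ``twistor'' projection $D$ of $\nabla h$ whose Weitzenb\"ock remainder involves only $\Rm_-$, so that eliminating $\nabla^*\nabla h$ via the linearised Einstein equation leaves a definite $\Rm_+$ term. The paper instead encodes the metric $g$ as a torsion-free negative-definite $\SO(3)$-connection $A$ (the Levi--Civita connection on $\Lambda^+$), rewrites the infinitesimal Einstein deformation as an $E$-valued 1-form $a$, sets up a vertical/horizontal gauge for the action of bundle automorphisms of $\Lambda^+$ covering diffeomorphisms, and derives an integral identity
\[
\int_M \bigl[-P_{ij}\langle (\diff_A^- a)^i, (\diff_A^- a)^j\rangle + |a|^2\bigr]\mu_A = 0,
\]
where $P$ is negative definite precisely because $\Rm_+<0$. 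The two terms are then separately non-negative, forcing $a=0$. The definite-connection framework does for free what you would need a bespoke Weitzenb\"ock formula to do: the operator $\diff_A^-$ together with vertical gauge-fixing \emph{is} the chiral first-order operator, and the curvature enters through the single definite matrix $P=(\Psi-\delta)^{-1}$ rather than through a curvature endomorphism that has to be disentangled into $\Rm_+$ and $\Rm_-$ parts.

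There is a genuine gap in your proposal, and you flag it yourself: the existence of a projection $D=\pi\nabla$ on $T^*M\otimes\Lambda^+\otimes\Lambda^-$ with
\[
D^*D = a\,\nabla^*\nabla + b\,\Rm_- + c\cdot\Id
\]
and \emph{no} $\Rm_+$ contribution is not established, and it is far from obvious. Decomposing $T^*M\otimes S^2_0 T^*M$ under $\SO(4)$ gives four irreducible summands $(\tfrac32,\tfrac32)\oplus(\tfrac32,\tfrac12)\oplus(\tfrac12,\tfrac32)\oplus(\tfrac12,\tfrac12)$, and there is no summand that is ``irreducible with respect to the $\Lambda^-$ factor alone,'' so the projection you describe is not yet well-defined. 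Because $T^*M=(\tfrac12,\tfrac12)$ carries both chiralities, a generic Stein--Weiss remainder mixes $W^+$ and $W^-$; whether a linear combination of the four projections kills the $\Rm_+$ contribution is a concrete computation with Casimir-determined coefficients, and this computation is precisely the content of the theorem, not a routine step that can be left schematic. The boundary-term analysis is also left vague; in the paper this is handled not by direct decay estimates but by a Fredholm gauge-fixing decomposition in the $0$-calculus (Proposition~\ref{gauge-fixing}) and by deriving the integral identity as the derivative of a quantity that vanishes identically (Lemma~\ref{integral-condition}), so no truncation argument is needed. Until the operator $D$ is exhibited and its Weitzenb\"ock formula computed, your proposal is a plausible programme rather than a proof.
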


We make three quick remarks about this result.

\begin{remarks}
~\begin{enumerate}
\item
One can check that for \emph{any} Poincaré--Einstein 4-manifold, $\Rm_\pm \to - \Id$ as $\rho \to 0$ so at least near infinity the curvature condition is automatically satisfied. 
\item
A previous article \cite{FKS} considered exactly this inequality for a \emph{compact} Einstein 4-manifold $(M,g)$, with the analogous conclusion: the only infinitesimal Einstein deformations of $g$ are given by Lie derivatives. The proof of  Theorem~\ref{main-result}  follows the ideas of \cite{FKS} closely, but new arguments are needed in several places to deal with the fact that $M$ is not compact. Moreover, the relevant equations become \emph{degenerate} at the boundary, in the sense that the symbol vanishes there. This means that the classical elliptic theory is no longer available.  Instead we will use the $0$-calculus of Mazzeo and Melrose \cite{Mazzeo,Mazzeo-Melrose}.
\item
To compare this inequality to one involving sectional curvatures, let $\lambda_{\pm}$ be the largest eigenvalues of $\Rm_{\pm}$. Then the hypothesis in Theorem~\ref{main-result} requires one of $\lambda_+$ or $\lambda_-$ to be negative whilst non-positive sectional curvatures means $\lambda_+ + \lambda_- \leq 0$. The difference between the inequalities is also evident in examples. In  \cite{Calderbank-Singer}, Calderbank and Singer construct \emph{anti-self-dual} Poincaré--Einstein metrics on many different 4-manifolds which all have non-trivial~$\pi_2$. Anti-self-dual is equivalent to $\Rm_+ = - \Id$ which certainly satisfies our inequality. Meanwhile the fact that $\pi_2 \neq 0$ means that these manifolds carry no complete metrics of non-positive curvature whatsoever. 
\end{enumerate}
\end{remarks}

The rest of the paper has two sections. In \S\ref{background} we describe the technical background needed: \S\ref{0-calculus} reviews the $0$-calculus, designed to treat uniformly degenerate PDE of the kind which appear here; \S\ref{Bianchi-gauge-review} recalls the gauge-fixed Einstein equations and the definition of non-degeneracy; \S\ref{def-conn} describes definite connections, which are a way of parametrising Einstein metrics for which one of $\Rm_+$ or $\Rm_-$ is a definite endomorphism. In \S\ref{the-proof} we give the proof of Theorem~\ref{main-result}. 

\subsubsection*{Acknowledgements} 

I would like to thank Rafe Mazzeo and Michael Singer for several helpful conversations on this topic. This research were supported by the ERC consolidator grant ``SymplecticEinstein'' 646649 and the Excellence of Science grant 4000725.

\section{Background material}\label{background}

\subsection{Preliminaries on 0-elliptic differential operators}
\label{0-calculus}

In various places our proofs will use geometric elliptic operators on Poincaré--Einstien manifolds. The relevant analytic framework is the so-called 0-calculus, developed by Mazzeo and Melrose \cite{Mazzeo,Mazzeo-Melrose}. We give a rapid review of the parts we need here.

Recall that $\overline{M}$ denotes a compact $(n+1)$-dimensional manifold with boundary $X$ and interior $M$. Fix a collar neighbourhood of the boundary $U \cong X \times [0,1]$. Write $\rho$ for the boundary defining function pulled back from $[0,1]$ and $x_1, \ldots ,x_n$ for coordinates on $X$. Together, $\rho, x_i$ are local coordinates on $\overline{M}$. A 0-differential operator of order $d$ acting on functions is a differential operator which locally has the form
\[
D = \sum_{i+|\beta|\leq d} c_{i,\beta}(\rho, x)(\rho \del_\rho)^i(\rho\del_x)^{\beta}
\]
where the coefficient functions are smooth up to the boundary. More invariantly, write $\Vect_0$ for the set of all tangent vector fields on $\overline{M}$ which are tangent to the boundary. The class of $0$-differential operators acting on functions is the sub-algebra of differential operators generated by $\Vect_0$ over $C^\infty(\overline{M})$. One can similarly talk about $0$-differential operators acting on vector-valued functions or sections of bundles, in which the coefficients $c_{i,\beta}$ of $D$ (written in local trivialisations) take values in matrices.

We will consider $0$-differential operators acting on weighted Banach spaces. Fix a conformally compact metric $g$ on $M$. Note that $\Vect_0$ contains precisely those vector fields of bounded norm with respect to $g$. We write $C^{k,\alpha}$ and $W^{p,k}$ for the Hölder and Sobolev spaces defined by $g$. Given a weight $\nu \in \R$ we write $C^{k,\alpha}_\nu$ for the set of functions of the form $\rho^\nu f$ where $f \in C^{k,\alpha}$ and similarly for $W^{p,k}_\nu$. A $0$-differential operator $D$ of order $d$ preserves the weighted spaces, giving maps $D \colon C^{k+d,\alpha}_{\nu} \to C^{k,\alpha}_\nu$ and $D\colon W^{p,k+d}_\nu \to W^{p,k}_{\nu}$. We now describe conditions under which these maps are Fredholm.

The \emph{$0$-symbol} of $D$ in the direction $(\xi, \eta_1, \ldots , \eta_n)$ is defined to be
\[
\zs(D)(\rho,x)(\xi,\eta)
  =
    \sum_{i + |\beta|=d} c_{i,\beta}(\rho,x) \xi^i\eta^\beta
\]
%
A $0$-differential operator $D$ is called \emph{$0$-elliptic} if $\zs(D)(\xi,\eta)$ is an isomorphism whenever $(\xi, \eta) \neq 0$. This alone is not sufficient, however, to guarantee good mapping properties. The reason is that the \emph{traditional} symbol vanishes at $\rho= 0$ and so the usual methods for constructing local inverses to $D$ fail at the boundary. To extend the local analysis of $D$ to $X$ one must also consider the \emph{normal operator} of $D$. There is a normal operator for each boundary point; at $(0,x_1,\ldots, x_n)$ it is the differential operator $N(D)$ defined on the half-space $\{(r,s_1, \ldots s_n): r \geq 0\}$ by
\[
N(D)(0,x) 
  = 
    \sum_{i + |\beta|\leq d} 
      c_{i,\beta}(0,x)
      (r \del_r)^i(r \del_s)^\beta
\]
More invariantly, $N(D)(0,x)$ is defined over the inward pointing half-space in $T_{(0,x)}\overline{M}$. Choose coordinates $(\rho,x)$ centred at the boundary point of interest, inducing coordinates $(r,s)$ on $T_{(0,0)}\overline{M}$. The family of homotheties $(\rho,x) = \epsilon(r,s)$ for decreasing $\epsilon$ identifies larger and larger regions in the half-space with a smaller and smaller neighbourhood of $(0,0) \in \overline{M}$ and we can use this to make $D$ act on functions with compact but increasingly large support in $T_{(0,0)}\overline{M}$. In an appropriate limit as $\epsilon \to 0$ we obtain the normal operator. When $D$ sends sections of $V$ to sections of $W$, we must also trivialise the bundles near $(0,0)$ to make sense of this procedure; the limiting operator $N(D)$ then sends sections of the trivial bundle with fibre $V_{(0,0)}$ to sections of the trivial bundle with fibre $W_{(0,0)}$. 

\begin{remark}\label{geometric-normal-operators}
Suppose for a moment that $g$ is a conformally compact metric on $M$, and $D$ is an associated geometric $0$-differential operator (i.e.\ determined entirely by the Levi--Civita connection and metric and algebraic contractions). Then the normal operator $N(D)$ is simply the analogous operator defined on hyperbolic space. In particular it does not depend on the boundary point. The reason is that if we equip the half-space with the hyperbolic metric $r^{-2}(\diff r^2 + \diff s^2)$, then the homotheties $(\rho, x) =\epsilon (r,s)$ are asymptotically isometric in the limit $\epsilon \to 0$ (at least up to an overall scale).  
\end{remark}  

Finally we will also need the indicial polynomial of $D$. This is again defined at each boundary point; at $(0,x_1, \ldots, x_n)$ it is the polynomial in $\lambda \in \C$ given by
\[
I(D)(0,x)(\lambda)
  =
    \sum_{i+ |\beta| \leq d}
      c_{i, \beta}(0,x) \lambda^i  
\]
Taken invariantly, when $D$ sends sections of $V$ to sections of $W$, this is a polynomial with values in $\Hom(V_{(0,x)},W_{(0,x)})$. The \emph{indicial roots of $D$} are those $\lambda$ for which $I(D)$ has kernel. Again, these may in general depend on~$x$ but for a geometric operator they do not. 

We write $D^*$ for the formal adjoint of $D$ acting on $L^2$. The $0$-symbols, normal operators and indicial polynomials are related by $\zs(D^*)= \zs(D)^*$, $N(D^*)=N(D)^*$ and $I(D^*)(\lambda)=I(D)(n-\lambda)$. In particular the indicial roots of $D^*$ are $n - \lambda_i$ where the $\lambda_i$ are the indicial roots of~$D$.


With this background in place, we can now state the main result on $0$-elliptic operators which we will use. For $\nu \in \R$ we consider $D$ and $D^*$ between the corresponding weighted spaces (note the different weights used for the adjoint operator):
\begin{alignat}{3}
& D\colon C^{k+d,\alpha}_{\nu}(M) \to C^{k,\alpha}_{\nu}(M)
\qquad
&& \text{and}
\qquad
&& D\colon W^{p,k+d}_{\nu-n/2}(M) \to W^{p,k}_{\nu-n/2}(M)
\label{D-maps}\\
& D^*\colon C^{k+d,\alpha}_{n-\nu}(M) \to C^{k,\alpha}_{n-\nu}(M)
\qquad
&& \text{and}
\qquad
&& D^*\colon W^{p,k+d}_{n/2-\nu}(M) \to W^{p,k}_{n/2-\nu}(M)
\label{Dstar-maps}
\end{alignat}

\begin{theorem}[\cite{Mazzeo}]\label{mazzeo-fredholm}
Let $D$ be a $0$-elliptic operator of order $d$. We assume that
\begin{itemize}
\item The indicial roots of $D$ are constant.
\item There is no indicial root of $D$ whose real part lies in the interval $(\lambda_1,\lambda_2)$.
\item For some $\nu \in (\lambda_1, \lambda_2)$ the normal operator is invertible as a map 
\[
N(D) \colon W^{2,d}_{\nu -n/2}(\H^{n+1}) \to L^2_{\nu-n/2}(\H^{n+1}). 
\]
\end{itemize}
Then 
\begin{enumerate}
\item
For all $\nu \in (\lambda_1, \lambda_2)$, any $p>1$, any $k \in \N$ and any $\alpha \in (0,1)$, the maps in \eqref{D-maps} and \eqref{Dstar-maps} are Fredholm, with index independent of $\nu$. 
\item
The kernels of the maps in \eqref{D-maps} and \eqref{Dstar-maps} are all equal,  independent of $\nu \in (\lambda_1, \lambda_2)$ and whether we are considering Hölder or Sobolev spaces. 
\item
The kernel of the maps in \eqref{Dstar-maps} project isomorphically onto the cokernel of the maps~\eqref{D-maps}, and vice versa.
\end{enumerate}
\end{theorem}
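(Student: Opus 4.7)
The plan is to build a parametrix $Q$ for $D$ lying in the $0$-pseudodifferential calculus of Mazzeo--Melrose such that $QD = I - K_1$ and $DQ = I - K_2$, with $K_1, K_2$ compact on the weighted spaces appearing in \eqref{D-maps}. Standard functional analysis then implies that $D$ is Fredholm between those spaces, and the same argument applied to $D^*$ handles \eqref{Dstar-maps}. The construction of $Q$ has two ingredients corresponding to the two boundary faces of the $0$-blow-up of $\overline{M}\times \overline{M}$ along the boundary diagonal: a local ingredient controlled by the $0$-symbol and a boundary ingredient controlled by the normal operator.

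First I would invert the $0$-symbol to produce an initial parametrix $Q_0$ supported near the lifted diagonal; $0$-ellipticity plays exactly the role played by classical ellipticity in the standard pseudodifferential calculus, and the procedure yields residual errors $DQ_0 - I$ and $Q_0 D - I$ whose Schwartz kernels are smooth on the interior. Next I would improve $Q_0$ at the boundary face using the normal operator: since $N(D)$ is invertible between the stated weighted spaces on $\H^{n+1}$ by hypothesis, its inverse has a Schwartz kernel on hyperbolic space with controlled decay, and by remark~\ref{geometric-normal-operators} this model inverse can be glued onto the front face of the $0$-double space fibrewise (the geometry being asymptotically homogeneous) to produce an improved parametrix $Q$ with residuals that now decay at the boundary. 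The hypothesis that no indicial root has real part in $(\lambda_1, \lambda_2)$ is precisely what ensures that for $\nu$ in this open interval the resulting residual kernels map $W^{p,k+d}_{\nu - n/2}$ compactly into $W^{p,k}_{\nu - n/2}$, by the boundedness/compactness theorem for $0$-pseudodifferential operators on weighted spaces.

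Granted Fredholmness, items~(2) and~(3) follow from boundary regularity and duality. For independence of $\ker D$ on $\nu \in (\lambda_1, \lambda_2)$ and on the choice of H\"older versus Sobolev scale, I would show that any solution $u$ of $Du = 0$ in the largest space admits an asymptotic expansion $u \sim \sum_j \rho^{\lambda_j}(\log \rho)^{k_j} a_j(x)$ with $\lambda_j$ running over indicial roots; the indicial gap forces the expansion to begin at a root with $\re \lambda_j \geq \lambda_2$, so $u$ already lies in the smallest of the spaces considered. This decay argument is a standard boundary regularity result for $0$-elliptic operators, proved by iteratively applying the parametrix to trade a power of $\rho$ for more regularity until an indicial root is encountered. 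Item~(3) then follows from the identification $I(D^*)(\lambda) = I(D)(n-\lambda)$ together with the pairing of $C^{k,\alpha}_\nu$ and $C^{k,\alpha}_{n-\nu}$ via the hyperbolic volume form, which realises \eqref{Dstar-maps} as the Banach-space dual of \eqref{D-maps} in the usual Fredholm sense.

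The main obstacle is the construction of the parametrix on the $0$-stretched product and the proof that its remainders act compactly on the weighted H\"older and Sobolev spaces. This requires the structure theorem for Schwartz kernels of $0$-pseudodifferential operators, sharp estimates for $N(D)^{-1}$ on hyperbolic space, and an Arzel\`a--Ascoli-type improvement of boundedness to compactness which works only because the open indicial gap lets the remainder kernels decay strictly better than is required for mere boundedness. Once these pieces are in place, the remaining steps are formal consequences.
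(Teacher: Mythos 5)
The paper does not prove Theorem~\ref{mazzeo-fredholm}: it is stated as a background result and attributed to Mazzeo's work \cite{Mazzeo,Mazzeo-Melrose}, so there is no ``paper's own proof'' to compare against. What you have written is a reasonable thumbnail of the argument one actually finds in the cited references: construct a parametrix in the $0$-pseudodifferential calculus on the $0$-stretched product, inverting the $0$-symbol along the lifted diagonal and the normal operator at the front face, then use the absence of indicial roots in $(\lambda_1,\lambda_2)$ to show the remainders are compact on the weighted spaces, and finally deduce items~(2) and~(3) from boundary regularity (polyhomogeneous expansion of kernel elements starting at the first indicial root $\ge \lambda_2$) and an $L^2$-duality argument.

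Two imprecisions worth flagging if you intend to flesh this out. First, $\overline{M}^2_0$ has \emph{three} boundary hypersurfaces (front face, left face, right face), not two; the decay of the parametrix kernel at the left and right faces is governed by the indicial roots, and it is this decay, together with the off-front-face vanishing to infinite order, that produces compactness on the weighted spaces. Saying the indicial gap ``lets the remainder kernels decay strictly better than required for boundedness'' is the right intuition, but the relevant decay lives on the side faces, and you need to say so. Second, the duality step is looser than it should be: $C^{k,\alpha}_\nu$ and $C^{k,\alpha}_{n-\nu}$ are not Banach duals, and the hyperbolic $L^2$ pairing of generic elements of these two spaces is at the borderline ($\rho^{-1}$) and does not converge. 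The standard route is to prove Fredholmness and the cokernel/kernel identification first on the Sobolev scale, where $W^{p,k}_{\nu - n/2}$ and $W^{p',k}_{n/2-\nu}$ pair honestly, and then use the boundary regularity theorem to show that kernel and cokernel elements lie in every space in the stated range, transferring all the statements to the H\"older scale. Since those kernel elements decay at the rate of the first indicial root above the gap, the pairing of an element of $\ker D$ with an element of $\ker D^*$ does converge, which is what ultimately saves the duality argument.
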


\subsection{The Einstein equation in Bianchi gauge}
\label{Bianchi-gauge-review}

The Einstein equation $\Ric(g) + ng=0$ is invariant under the action of diffeomorphisms and so is not elliptic. One of the standard ways to deal with this issue is to use Bianchi gauge, which we briefly recall here. The \emph{Bianchi operator} of the metric $g$ is the map $B_g \colon \Gamma(S^2T^*M) \to \Gamma(T^*M)$  defined by
\begin{equation}\label{Bianchi-operator}
B_g(h) = \div_g(h) + \frac{1}{2} \diff \left(\tr_g h\right).
\end{equation}
Here, $(\div_gh)_i = -\nabla^j h_{ij}$, where $\nabla$ is the Levi-Civita connection of $g$. The Bianchi operator can be combined  with the Einstein condition as follows. Given a conformally compact metric $\tilde{g}$, define $F_g(\tilde{g})$ by 
\begin{equation}
F_g(\tilde{g}) = \Ric(\tilde{g}) + n\tilde{g} +\div_{\tilde{g}}^*\left(B_g(\tilde{g})\right).
\label{gauge-fixed-Einstein}
\end{equation}
Here $(\div^*_g\alpha)_i = \nabla_{(i}\alpha_{j)}$.

We write $L_g$ for the linearisation of $F_g$ at $g$. This has the form
\[
L_g = D_g + \div_g^*\circ B_g
\]
where $D_g$ is the linearisation of $\tilde{g} \mapsto \Ric(\tilde{g})+n\tilde{g}$ at $g$. The point is that $D_g$ is not elliptic, since its kernel contains all Lie derivatives: $D_g(L_vg)=0$; however the $\div_g^* \circ B_g$ term precisely compensates for this, leading to an \emph{elliptic} linearisation.

The following is due, in various forms, to Biquard, Graham and Lee:

\begin{proposition}[\cite{Biquard,Graham-Lee,Lee}]
Let $(M,g)$ be a Poincaré--Einstein $(n+1)$-manifold. The operator $L_g$ is 0-elliptic, and formally self-adjoint, with indicial roots $-1,0,n$ and $n+1$. It has invertible normal operator on $L^2$ and so is Fredholm on weighted spaces with weight in $(0,n)$. 
\end{proposition}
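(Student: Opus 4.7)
My plan is to verify the three hypotheses of Theorem \ref{mazzeo-fredholm} in turn: 0-ellipticity, constant indicial roots bounding an indicial gap, and $L^2$-invertibility of the normal operator.

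For 0-ellipticity and self-adjointness, I use the classical calculation that on any Einstein manifold the Bianchi-gauge-fixed linearisation takes the Lichnerowicz form $L_g h = \tfrac{1}{2}\nabla^*\nabla h - \mathring R h$, where $\mathring R$ denotes the natural symmetric action of the Riemann tensor on $S^2 T^*M$. The cross terms arising from $\div_g^* \circ B_g$ precisely cancel the non-elliptic first-order parts of the Ricci linearisation, leaving only the principal part $\tfrac{1}{2}\nabla^*\nabla$ and a zeroth-order curvature term. Because $L_g$ is built from the Levi--Civita connection and algebraic contractions it is automatically a 0-differential operator, with 0-symbol $\tfrac{1}{2}|\xi|^2_g \Id$, which is invertible for $\xi \neq 0$. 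Self-adjointness of $\nabla^*\nabla$ and the pointwise symmetry of $\mathring R$ give formal self-adjointness.

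For the indicial roots, Remark \ref{geometric-normal-operators} tells me that the normal operator and the indicial polynomial are constant along the boundary and coincide with those of the corresponding operator on $\H^{n+1}$. I would work in the upper half-space $(\rho, x) \in (0, \infty) \times \R^n$ and decompose a symmetric 2-tensor, in the natural 0-coframe, into a normal-normal scalar $f$, a normal-tangential 1-form $\alpha$, and a tangential-tangential symmetric 2-tensor $k$ on $X$. Substituting $\rho^\lambda(f, \alpha, k)$ into $L_g$ and extracting the leading order in $\rho$ produces a block-structured indicial polynomial whose distinct roots, after calculation, are $\lambda \in \{-1, 0, n, n+1\}$, symmetric under $\lambda \mapsto n - \lambda$ as required by self-adjointness.

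Finally, for $L^2$-invertibility of the normal operator, I use that $L_g$ has no $L^2$ kernel on $\H^{n+1}$. This can be seen by separating variables: after Fourier decomposition against the spectrum of the boundary Laplacian, each mode becomes an ODE in $\rho$ whose indicial exponents at $0$ and at infinity rule out any non-zero $L^2$ solution, and self-adjointness then upgrades injectivity to invertibility. Theorem \ref{mazzeo-fredholm} now applies in the indicial gap $(0, n)$, yielding Fredholmness on the stated weighted spaces. The main obstacle in carrying out the plan is the indicial root computation, which requires a full tensor-by-tensor bookkeeping of how $\rho\del_\rho$ acts across the normal-tangential decomposition; the fact that the four roots arrange themselves into the symmetric pairs $\{-1, n+1\}$ and $\{0, n\}$ is an algebraic consequence of the Einstein condition that must be verified by direct calculation.
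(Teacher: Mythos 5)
The paper does not prove this Proposition; it is stated with a citation to Biquard, Graham--Lee and Lee, so there is no in-paper argument against which to compare. Your plan, however, is consistent with what those references actually do, and most of it is sound: the Lichnerowicz form $L_g = \tfrac12\nabla^*\nabla - \mathring R$ after the $\div_g^*\circ B_g$ cancellation is the standard computation giving $0$-ellipticity (symbol $\tfrac12|\xi|^2\Id$) and formal self-adjointness; and the indicial-root computation via the normal/tangential block decomposition on the half-space model, constant by Remark~\ref{geometric-normal-operators}, is precisely what one does. The one step you should strengthen is the last one. Knowing that the normal operator on $\H^{n+1}$ has no $L^2$ kernel and is formally self-adjoint does \emph{not} by itself give invertibility of $N(L_g)\colon W^{2,2}\to L^2$: self-adjointness lets you identify the cokernel with the kernel only once you already know the range is closed, and closed range is exactly what needs proving on a non-compact space. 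What the references (and this paper, in its own analogous lemma for $\diff_C^*\Pi_C\diff_C$) actually establish is a coercive estimate $\|u\|_{W^{2,2}} \le C\,\|N(L_g)u\|_{L^2}$, which gives injectivity \emph{and} closed range simultaneously; on $\H^{n+1}$ this comes out of the Weitzenb\"ock/Bochner identity for $\tfrac12\nabla^*\nabla-\mathring R$ using the constant curvature. Your separation-of-variables route can be made to work, but it then has to produce uniform resolvent bounds across the continuous spectrum of the boundary Laplacian, not merely rule out $L^2$ solutions mode by mode. Either supply that uniform estimate, or replace the last step with the Bochner coercivity argument, which is shorter and is the route the cited papers take.
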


Since $L_g$ is formally self-adjoint it has vanishing index. This means that $L_g$ is an \emph{isomorphism} between these weighted spaces precisely when it has no kernel in $L^2$. This motivates the definition of non-degeneracy. 

\begin{definition}\label{definition-non-degeneracy}
A Poinacré--Einstein metric $g$ is called \emph{non-degenerate} if there is no non-zero solution to $L_g(h)=0$ with $h \in L^2$. Equivalently, $g$ is non-degenerate if $L_g \colon W^{2,2} \to L^2$ is an isomorphism. 
\end{definition}

Solutions to the gauge fixed equations~\eqref{gauge-fixed-Einstein} are indeed Einstein metrics. This standard but important fact (and, in particular, its linearisation) will be important in what follows. We state it here, with a sketch of the proof.
\begin{proposition}\label{gauge-fixing-does-job}
Let $(M,g)$ be a Poinacré--Einstein manifold. 
	\begin{itemize}
	\item
	If $\tilde{g}$ is another conformally compact metric, close to $g$ in $C^2$, with $F_g(\tilde{g}) =0$ then $\Ric(\tilde{g}) + n \tilde{g}=0 = \div^*_{\tilde{g}}(B_g(\tilde{g}))$.
	\item
	If $h\in L^{2} \cap \ker L_g$, then $D_g(h)=0 = \div_g^* B_g(h)$. 
	\end{itemize}
\end{proposition}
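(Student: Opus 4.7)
Both parts rest on the \emph{contracted second Bianchi identity}: for any Riemannian metric $h$ one has $B_h(\Ric(h) + nh) = 0$. Indeed, $B_h \Ric(h) = 0$ is the classical twice-contracted Bianchi identity, while $B_h(h) = 0$ because $\nabla^h h = 0$ and $\tr_h h = n+1$ is constant. The plan is to apply $B$ to the gauge-fixed equation, obtain an elliptic equation for the 1-form $B_g(\tilde g)$ (respectively $B_g h$), and conclude via a vanishing theorem for the operator $P_g := B_g \circ \div^*_g$ on 1-forms.

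For the nonlinear statement, I would apply $B_{\tilde g}$ to $F_g(\tilde g) = 0$; the Ricci and volume terms drop out by Bianchi, leaving
\[
B_{\tilde g}\,\div^*_{\tilde g}\bigl(B_g(\tilde g)\bigr) = 0.
\]
A standard Weitzenb\"ock computation gives $P_g = \tfrac12 \nabla^*\nabla + (\text{Ricci term})$ on 1-forms, so $P_g$ is a formally self-adjoint $0$-elliptic operator whose indicial and normal-operator analysis parallels that of $L_g$ recalled above. On a Poincar\'e--Einstein manifold, integration by parts (in the $0$-calculus framework of Graham--Lee, Biquard, Lee) shows that $P_g$ has trivial $L^2$ kernel, hence it is an isomorphism on suitably weighted spaces by Theorem~\ref{mazzeo-fredholm}. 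Since $\tilde g$ is $C^2$-close to $g$, the identity $B_g(\tilde g) = B_g(\tilde g - g)$ (valid because $B_g(g)=0$ and $B_g$ is linear in its argument once $g$ fixes the connection) places $B_g(\tilde g)$ in such a weighted space; and $P_{\tilde g}$ is a small perturbation of $P_g$, hence still injective there. Thus $B_g(\tilde g) = 0$, and substituting back into $F_g(\tilde g) = 0$ gives $\Ric(\tilde g) + n\tilde g = 0$ and $\div^*_{\tilde g}(B_g(\tilde g)) = 0$.

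For the linearised statement, I would differentiate $B_h(\Ric(h) + nh) = 0$ at $h = g$. Since $\Ric(g) + ng = 0$, the contribution from the derivative of $B$ vanishes, leaving the universal identity $B_g \circ D_g = 0$. Applying $B_g$ to $L_g h = D_g h + \div^*_g B_g h = 0$ therefore gives $P_g(B_g h) = 0$. Since $h \in L^2$ and $L_g$ is $0$-elliptic, $0$-elliptic regularity places $h$ in $W^{2,2}$, so $B_g h \in L^2$. The $L^2$-triviality of $\ker P_g$ used above then forces $B_g h = 0$, whence $D_g h = 0$ and $\div^*_g B_g h = 0$.

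The main obstacle is the $L^2$-vanishing theorem for $P_g$ on a Poincar\'e--Einstein manifold: the Bochner identity alone gives positivity only up to the negative Ricci contribution, so one must combine it with the $0$-calculus analysis of indicial roots to ensure that the boundary terms produced by integration by parts vanish for $L^2$ sections. The secondary difficulty, in the nonlinear part, is openness of injectivity under the $C^2$-perturbation $g \rightsquigarrow \tilde g$, which follows from continuity of the normal operator and Fredholm index in the $0$-calculus.
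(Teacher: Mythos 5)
Your overall strategy matches the paper's: apply the Bianchi operator to the gauge-fixed equation, recognise the Weitzenb\"ock/Bochner identity for $B\circ\div^*$ on 1-forms, and deduce that $B_g(\tilde g)$ (resp.\ $B_g(h)$) must vanish. But the route you take to the vanishing is a genuine detour, and it reflects a misconception about the role of the Ricci term that is worth flagging. You say the ``main obstacle'' is that ``the Bochner identity alone gives positivity only up to the negative Ricci contribution'', as if the Ricci term were a threat to be neutralised by indicial-root analysis. In fact the negative Ricci is exactly what makes the argument close without any Fredholm theory: with the paper's conventions $2\,B_{\tilde g}\circ\div^*_{\tilde g}=\nabla^*_{\tilde g}\nabla_{\tilde g}-\Ric(\tilde g)$, and since $\tilde g$ is $C^2$-close to a Poincar\'e--Einstein metric we have $\Ric(\tilde g)\leq -c\,\tilde g$ with $c>0$, so for $\alpha\in W^{2,2}$ one integrates by parts directly to get
\[
0=\int\big(|\nabla_{\tilde g}\alpha|^2-\Ric(\tilde g)(\alpha,\alpha)\big)\,\dvol_{\tilde g}\geq\int\big(|\nabla_{\tilde g}\alpha|^2+c|\alpha|^2\big)\,\dvol_{\tilde g},
\]
forcing $\alpha=0$. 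That is the whole proof. You instead pass through the $0$-calculus to show $P_g$ is an isomorphism on weighted spaces and then argue that $P_{\tilde g}$ is a small perturbation and hence still injective; this is not wrong, but it is a heavier machine than the problem requires, and it also sidesteps the fact that the operator in play is $B_{\tilde g}\circ\div^*_{\tilde g}$ (not $P_g$), so the Bochner estimate should simply be applied directly with the metric $\tilde g$ --- there is then no perturbation step at all. For the linearised statement the comparison is the same: the paper observes $B_g\circ D_g=0$ just as you do, then notes that $L^2\cap\ker L_g$ sits in all $W^{p,k}$ by $0$-elliptic regularity so $B_g(h)\in W^{2,2}$, and runs the identical integration by parts with $\Ric(g)=-ng$. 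In short: your key lemma (Bianchi plus Weitzenb\"ock) is the right one, but where the paper closes the argument in one integration by parts, you reach for the Mazzeo--Melrose isomorphism theorem --- which is unnecessary here precisely because the Ricci term has the helpful sign on a Poincar\'e--Einstein manifold.
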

\begin{proof}[Sketch of proof]
Suppose that $F_g(\tilde{g})=0$. By the Bianchi identity, $B_{\tilde{g}}(\Ric(\tilde{g})) = 0$; meanwhile one checks directly that $B_{\tilde{g}}(\tilde{g}) =0$ and so we get from $B_{\tilde{g}}(F_g(\tilde{g}))=0$ that
\begin{equation}
B_{\tilde{g}}(\div^*_{\tilde{g}}(B_g(\tilde{g}))) = 0
\label{B-div-B}
\end{equation}
A calculation shows that 
\begin{equation}
2 B_{\tilde{g}} \circ \div^*_{\tilde{g}} = \nabla_{\tilde{g}}^*\nabla_{\tilde{g}} - \Ric(\tilde{g}) 
\label{B-div-Bochner}
\end{equation}
Now suppose that $\Ric(\tilde{g}) \leq - c \tilde{g}$ for $c>0$ (which is true if $\tilde{g}$ is close enough to $g$ in $C^2$) and let $\alpha \in W^{2,2}(T^*M)$ with $B_g(\div^*_g \alpha)=0$. Then integrating the above equation against $\alpha$ we obtain that 
\[
0 \geq \int \left( |\nabla_{\tilde{g}} \alpha|^2  + c |\alpha|^2 \right)\dvol_{\tilde{g}}
\]
Hence $\alpha =0$.  This, together with equation~\eqref{B-div-B}, implies that $B_g(\tilde{g}) = 0$ and so $\Ric(\tilde{g}) + n \tilde{g}=0$ also.

For the second part we differentiate this argument.  Firstly note that since $h \in \ker L^2 \cap L_g$ then in fact $h \in W^{p,k}$ for any $p>1, k \geq 0$. Secondly, for any metric $\tilde{g}$, we have  $B_{\tilde{g}}(\Ric(\tilde{g})+n\tilde{g})=0$. Differentiating this with respect to the metric and evaluating at $g$, and using the fact that $\Ric(g)+ng=0$, we obtain $B_g \circ D_g = 0$. So applying $B_g$ to the equation $0= D_g(h) + \div_g^*B_g(h)$ we see that $B_g\div^*_g(B_g(h))=0$. Now $B_g(h) \in W^{2,2}$ (since $h \in W^{2,3}$) and so equation~\eqref{B-div-Bochner} again shows that $B_g(h)=0$ and hence $D_g(h)=0$ also. 
\end{proof}

\subsection{Definite connections}\label{def-conn}

The proof of Theorem~\ref{main-result} relies on an alternative approach to 4-dimensional Einstein metrics which is due independently to the author \cite{Fine} and Krasnov \cite{Krasnov}. We describe it briefly here. A comprehensive account aimed at a mathematical audience can be found in \cite{Fine-Krasnov-Panov}; a briefer treatment, which follows more closely the viewpoint used here, is given in \cite{FKS}. These references also explain how this approach gives a variational formulation of Einstein metrics, a side of the story which we ignore here. 

The main idea is to parametrise Riemannian metrics by (an open subset of) $\SO(3)$-conn\-ections, loosely analogous to the relationship between the electormagnetic field and electromagnetic potential (thought of as a $\U(1)$-connection). Let $M$ be a 4-manifold, $E \to M$ an oriented $\SO(3)$-bundle and $A$ an $\SO(3)$-connection in $E$. The metric and fibrewise volume-form give a series of isomorphisms 
\[
E^* \cong E \cong \Lambda^2E^* \cong \so(E) \cong \so(E)^*.
\] We will use these to treat the curvature $F_A \in \Omega^2(M,\so(E))$ sometimes as a section of $\Lambda^2\otimes E$ and sometimes as a homomorphism $E \to \Lambda^2$. We will often make these kinds of identifications without comment. 

The following definition appears explicitly in \cite{Fine-Panov}, but it is also a special case of the ``fat connections'' introduced by Weinstein \cite{Weinstein}.
\begin{definition}\label{definite-connection}
~
\begin{itemize}
\item
An $\SO(3)$-connection $A$ in $E$ is called \emph{definite} if for any point $x \in M$ and for any pair of independent tangent vectors $u,v \in T_xM$, the infinitesimal rotation $F_A(u,v) \in \so(E_x)$ is non-zero. 
\item
Equivalently, $A$ is definite if the image of $F_A \colon E \to \Lambda^2$ is a maximal definite subspace for the wedge-product on $\Lambda^2$. In more detail: the wedge product defines a symmetric bilinear form $\Lambda^2\times \Lambda^2 \to \Lambda^4$; dividing by a nowhere vanishing 4-form $\mu$ makes this a non-degenerate inner-product of signature $(3,3)$; we ask that $\im F_A$ has rank~3, and that the wedge-product is definite here.
\item
Equivalently, if $e_i$ is a local orthonormal frame for $E$ in which the curvature becomes $F_A = \sum F_i \otimes e_i$ for a triple of 2-forms $F_i$, and if $\mu$ is an auxiliary choice of nowhere vanishing 4-form, then the following symmetric matrix-valued function is definite (i.e.\ all eigenvalues have the same sign):
\[
Q(\mu)_{ij} = \frac{F_i \wedge F_j}{2\mu}.
\]
More invariantly, $Q(\mu)$ is a definite self-adjoint endomorphism of $E$.

\end{itemize}
\end{definition}

The key feature of a definite connection $A$, for our purposes at least, is that \emph{it determines in a canonical way a Riemannian metric $g_A$ on $M$}. The definition of $g_A$ goes in three steps. 
\begin{enumerate}
\item
$A$ determines an orientation on $M$: a volume form $\mu$ is positively oriented if $Q(\mu)$ is \emph{positive} definite. 
\item
There is a unique conformal structure on $M$ for which $A$ is a self-dual instanton, namely that for which the image of $F_A \colon E \to \Lambda^2$ is the  bundle of self-dual 2-forms. We write $\Lambda^+_A$ for the image of $F_A$. In a local frame $e_i$ of $E$ in which $F_A = \sum F_i \otimes e_i$, we have  $\Lambda^+_A = \left\langle F_i \right\rangle$. Notice that the isomorphism $E \to \Lambda^+_A$ means that the the topological type of the bundle $E$ is a priori fixed. 
\item
$A$ determines a volume form $\mu_A$ as follows. Given an abritrary volume form $\mu$, the self-adjoint endomrophism $Q(\mu)$ and $\mu$ scale inversely. We single out a particular volume form, denoted $\mu_A$, by requiring that 
\[
\tr\sqrt{ Q(\mu_A)} = 3.
\] 
(Here $\sqrt{Q}$ denotes the positive square root of $Q$.) We will explain the reason for this choice in Remark~\ref{why-tr-sqrt-3} below. We write $g_A$ for the unique Riemannian metric on $M$ with volume form $\mu_A$ and for which $F_A$ is self-dual.
\end{enumerate}

\begin{definition}
The \emph{gauge group} $\G$ is the group of all diffeomorphisms $E \to E$ which send fibres to fibres by linear isometries. Equivalently, if $P$ denotes the prinicpal $\SO(3)$-bundle of frames in $E$, then $\G$ is the group of diffeomorphisms of $P$ commuting with the principal $\SO(3)$-action. The group $\G$ fits into a short exact sequence
\begin{equation}
1 \to \G_V \to \G \to \Diff(M) \to 1
\label{G-extension}
\end{equation}
where $\G_V$ is the subgroup of ``vertical'' gauge transformations, i.e.\ those that cover the identity on $M$. The group $\G$ acts by pull-back on the space of all $\SO(3)$-connections in $E$; this action preserves the open set of definite connections. 
\end{definition}

We will need the formula for the infinitesimal action $R_A \colon \Lie(\G) \to \Omega^1(M,E)$ of $\G$ at a connection $A$. To describe it, we note first that the Lie algebra of $\G$ is a special subalgebra of the vector fields on $E$. Given a connection $A$ in $E \to M$, any vector field on $E$ splits into horizontal and vertical parts. If $\xi \in \Lie(\G)$ it has a special form: $\xi = u + v_A$ where $v_A$ is the $A$-horizontal lift of some $v \in \Gamma (M,TM)$ and the vertical part is $u \in \Gamma(M,\so(E))$. 

\begin{lemma}\label{infinitesimal-gauge-action}
Suppose $\xi \in \Lie(\G)$ has the splitting $\xi = u + v_A$ with respect to a connection $A$. The infinitesimal action of $\xi$ at a $A$ is given by $R_A(\xi) = \diff_A u + \iota_v F_A$.
\end{lemma}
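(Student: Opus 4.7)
The plan is to identify $R_A(\xi)$ with the Lie derivative of the connection $1$-form on the principal frame bundle and then unpack it with Cartan's formula. More precisely, lift everything to $P$: an element $\xi \in \Lie(\G)$ corresponds to an $\SO(3)$-invariant vector field on $P$, and $A$ itself is an $\so(3)$-valued $1$-form on $P$. Since $\G$ acts by pull-back, $R_A(\xi) = \frac{d}{dt}\big|_{t=0}\phi_t^*A = \L_\xi A$, where $\phi_t$ denotes the flow of $\xi$ on $P$.

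Next I would apply Cartan's magic formula, $\L_\xi A = \diff(\iota_\xi A) + \iota_\xi \diff A$, and analyse each piece using the splitting $\xi = u + v_A$. The connection $1$-form picks out exactly the vertical part, so $\iota_\xi A = u$ (viewed as an $\so(E)$-valued function on $P$), giving $\diff(\iota_\xi A) = \diff u$. For the other term I would use the structure equation $\diff A = F_A - \tfrac12[A\wedge A]$. Because $A$ annihilates horizontal vectors and evaluates to $u$ on the vertical part of $\xi$, this yields
\[
\iota_\xi \diff A = \iota_\xi F_A - [u,A].
\]
Combining these and recognising $\diff u + [A,u] = \diff_A u$ produces
\[
\L_\xi A = \diff_A u + \iota_\xi F_A.
\]

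Finally I would check that $\iota_\xi F_A = \iota_v F_A$ and that the right-hand side is a basic form, so that it descends to the claimed $\Omega^1(M,E)$. Since $F_A$ is horizontal, only the horizontal part $v_A$ of $\xi$ contributes; and because $v_A$ is the horizontal lift of $v$, the contraction $\iota_{v_A}F_A$ projects to $\iota_v F_A$ on $M$ under the identification $\so(E) \cong E$. The form $\diff_A u$ is manifestly basic and horizontal, and so is $\iota_v F_A$, so the sum defines an element of $\Omega^1(M,E)$, completing the identification.

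The calculation is entirely standard once the setup on $P$ is in place; the only mild subtlety — and the one place I would be careful — is sign and identification conventions: both the sign of the bracket term in $\diff_A u = \diff u + [A,u]$ and the implicit use of the isomorphisms $\so(E)\cong \Lambda^2 E \cong E$ to regard $F_A$ as a $2$-form with values in $E$ rather than in $\so(E)$, so that the final expression lies in $\Omega^1(M,E)$ as stated.
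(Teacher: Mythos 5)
Your proposal is correct and takes essentially the same route as the paper: lift to the principal bundle, compute $R_A(\xi)=\L_\xi A$ via Cartan's formula, substitute the structure equation $\diff A = F_A - \tfrac12[A\wedge A]$, and absorb the bracket term into $\diff_A u$. You spell out a couple of steps the paper leaves implicit (that $\iota_\xi F_A=\iota_{v}F_A$ since $F_A$ is horizontal, and that the result is basic and descends to $\Omega^1(M,E)$), but the argument is the same.
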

\begin{proof}
This is easiest to check in the principal bundle formalism. There $\xi$ is a vector field on $P$ and the connection $A$ is an equivariant $\so(3)$-valued 1-form. Meanwhile, $u =A(\xi)$ is an $\SO(3)$-equivariant map $P \to \so(3)$. The infinitesimal action is the Lie derivative: 
\[
R_A(\xi) 
	= 
		L_\xi(A) 
	= 
		\diff (A(\xi)) + \iota_\xi (\diff A)
	=
		\diff u + \iota_\xi(F_A) - \frac{1}{2}\iota_\xi [A\wedge A]
	=
		\diff_A u + \iota_\xi(F_A)
	\qedhere
\] 	
\end{proof}

We will use definite connections to parametrise a subset of the Riemannian metrics on $M$, via the map $A \mapsto g_A$. 
We now explain how to identify those $A$ for which $g_A$ is Einstein. We begin by defining the \emph{torsion} of a definite connection. To ease the notation we write $Q_A = Q(\mu_A)$. This self-adjoint endomorphism of $E$ relates the metric on $E$ to the pull-back of the metric on $\Lambda^+_A$ via the isomorphism $F_A \colon E \to \Lambda^+_A$. Let 
\begin{equation}\label{Sigma-definition}
\Sigma_A =\pm F_A \circ Q_A^{-1/2}.
\end{equation} 
We will explain shortly how to fix the choice of sign in this equation, but we momentarily leave it ambiguous. With respect to a local frame in which $Q^{-1/2}_A = \sum P_{ij}e_i\otimes e_j$, we have $\Sigma_A = \sum \Sigma_i \otimes e_i$ where $\Sigma_i = \pm \sum_j P_{ij}F_j$. The point is that whilst the $F_i$ are merely a basis for $\Lambda^+_A$, the $\Sigma_i$ are a $g_A$-\emph{orthonormal} basis. In other words, $\Sigma_A \colon E \to \Lambda^+_A$ is an isometry.

We now address the sign in the definition of $\Sigma_A$. Consider the (locally defined) triple of homomorphisms $J_i \colon T^*M \to T^*M$ given by
\begin{equation}\label{J-definition}
J_i(\alpha) = *\sqrt{2} (\alpha \wedge \Sigma_i)
\end{equation}
Since the $\Sigma_i$ are unit length and self-dual, the $J_i$ are almost complex structures, i.e.~$J_i^2=-1$. Since the $\Sigma_i$ are orthogonal the $J_i$ satisfy the quaternion relations \emph{up to sign}:
\[
J_1J_2J_3 = \pm \Id.
\]
This sign does not depend on the choice of local oriented frame $e_i$. If one  goes carefully through the identifications, one sees that it does not depend on the orientation of $E$ either (because this appears in the isomorphism $\so(E) \cong E$). The sign depends only on the definite connection $A$ itself.  

\begin{definition}
The sign in the definition~\eqref{Sigma-definition} of $\Sigma_A$ is chosen so that $J_1J_2J_3 = -\Id$. When $+$ is used in \eqref{Sigma-definition} we call $A$ \emph{positive definite}, when $-$ is used we call $A$ \emph{negative definite}.
\end{definition}

Next we push $A$ forward to $\Lambda^+_A$ via $\Sigma_A$, to obtain a metric connection $\hat{A}$ in $\Lambda^+_A$. The torsion $\tau(\hat{A})$ of a connection $\hat{A}$ in $\Lambda^+_A$ is defined in direct analogy with the torsion of a connection in $\Lambda^1$. Concretely, consider the composition
\[
\Gamma(\Lambda^+_A) \stackrel{\nabla_{\hat{A}}}{\longrightarrow}
\Gamma(\Lambda^1 \otimes \Lambda^+_A)\stackrel{\sigma}{\longrightarrow}
\Gamma(\Lambda^3)
\]
where $\sigma$ is skew-symmetrisation. We write $\tau(\hat{A}) = \sigma \circ \nabla_{\hat{A}} - \diff$. This is a tensor, which we regard as lying in $ \Lambda^3 \otimes \Lambda^+$. 

If we treat $\Sigma_A$ as an element of $\Omega^2(M,E)$ then the covariant exterior derivative $\diff_\nabla \Sigma_A $ is an $E$-valued 3-form. One checks that $\tau(\hat{A})$ is identified with $\diff_A\Sigma_A$ via the isomorphism $1\otimes\Sigma_A \colon \Lambda^3\otimes E \to \Lambda^3\otimes \Lambda^+_A$. This explains the following definition.

\begin{definition}
Let $A$ be a definite connection. The 3-form $\diff_A \Sigma_A \in \Omega^3_A(M,E)$ is called \emph{the torsion of $A$}. When $\diff_A\Sigma_A = 0$ we say $A$ is \emph{torsion free}. 
\end{definition}

As the next Theorem explains, \emph{torsion-free definite connections correspond to Einstein metrics}. 

\begin{theorem}[Krasnov \cite{Krasnov}, cf.\ \cite{Fine}]
Let $A$ be a definite connection. When $A$ is torsion-free, the metric $g_A$ is Einstein, with $\Rm_+$ a definite endomorphism of $\Lambda^+$. If $A$ is positive definite, $\Rm_+>0$ and $R=12$. If $A$ is negative definite, $\Rm_+ <0$ and $R=-12$. Conversely all Einstein metrics satisfying such a curvature inequality arise this way.  
\end{theorem}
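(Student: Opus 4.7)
My plan is to transfer everything on the auxiliary bundle $E$ to the self-dual bundle $\Lambda^+_A$ using the isometry $\Sigma_A \colon E \to \Lambda^+_A$, and then to identify the torsion-free condition with the pushed-forward connection $\hat A$ on $\Lambda^+_A$ being the restriction of the Levi-Civita connection of $g_A$. Once that identification is in place, the three claims (Einstein, $\Rm_+$ definite with the claimed sign, $R = \pm 12$) follow from direct algebra using $F_A = \pm \Sigma_A \circ Q_A^{1/2}$ and the normalisation $\tr Q_A^{1/2} = 3$.

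For the forward direction, the first step is to unpack the torsion-free equation. The pushforward of $\Sigma_A \in \Omega^2(M, E)$ along $\Sigma_A \colon E \to \Lambda^+_A$ is the canonical element $\Theta \in \Omega^2(M,\Lambda^+_A)$ corresponding to $\Id_{\Lambda^+_A}$, and $d_A \Sigma_A = 0$ translates to $d_{\hat A} \Theta = 0$. I would then invoke the following fact (which is essentially a rephrasing of the uniqueness of the Levi-Civita connection): on a Riemannian 4-manifold there is a unique metric $\SO(3)$-connection on $\Lambda^+$ annihilating the canonical $\Theta$, namely $\nabla^{g_A}|_{\Lambda^+_A}$. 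Thus torsion-freeness forces $\hat A = \nabla^{g_A}|_{\Lambda^+_A}$. Its curvature is therefore the usual self-dual curvature operator of $g_A$, viewed as a map $\Lambda^+_A \to \Lambda^2$. But $F_{\hat A}$ is also the conjugate of $F_A$ by $\Sigma_A$, and by the construction of $g_A$ the image of $F_A$ is exactly $\Lambda^+_A$. So $F_{\hat A}$ has no component in $\Lambda^-$, i.e.\ $\Rm$ has no mixed self-dual/anti-self-dual piece, which as noted in the introduction is equivalent to $g_A$ being Einstein.

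Now I would read off the remaining information algebraically. Substituting $F_A = \pm \Sigma_A \circ Q_A^{1/2}$ into $F_{\hat A} = \Sigma_A \circ F_A \circ \Sigma_A^{-1}$ gives $\Rm_+ = \pm \Sigma_A \circ Q_A^{1/2} \circ \Sigma_A^{-1}$. Hence $\Rm_+$ is similar to $\pm Q_A^{1/2}$ and in particular is definite of the claimed sign. Since $g_A$ is Einstein, $\tr \Rm_+ = R/4$; combined with the normalisation $\tr Q_A^{1/2} = 3$ this gives $R = \pm 12$. This also explains the normalisation choice promised in Remark~\ref{why-tr-sqrt-3}.

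For the converse, start from an oriented Einstein 4-manifold $(M,g)$ with $\Rm_+$ definite; rescaling $g$ if necessary we may assume $R = \pm 12$. Take $E = \Lambda^+$ equipped with its induced metric and let $A$ be the Levi-Civita connection restricted to $\Lambda^+$. Then $F_A = \Rm_+$, which is a definite endomorphism, so $A$ is a definite connection. A short computation in an orthonormal frame of $\Lambda^+$ shows $Q(\mu_g) = \Rm_+^2$, hence $\tr Q(\mu_g)^{1/2} = |\tr \Rm_+| = |R|/4 = 3$; this identifies $\mu_A = \mu_g$ and $g_A = g$. Torsion-freeness is automatic because the push-forward of $A$ is by construction $\nabla^g|_{\Lambda^+}$, which annihilates $\Theta$.

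The main obstacle is the identification step $\hat A = \nabla^{g_A}|_{\Lambda^+_A}$: one must show that $d_{\hat A} \Theta = 0$ really pins down $\hat A$ as the Levi-Civita connection restricted to self-dual forms. This is where the full force of the torsion-free hypothesis is used, and it is the only place in the argument that genuinely depends on the four-dimensional splitting $\so(4) = \so(3)_+ \oplus \so(3)_-$; without it, all one could conclude is that $\hat A$ is some arbitrary metric $\SO(3)$-connection on $\Lambda^+_A$ and the Einstein conclusion would fail.
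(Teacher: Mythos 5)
Your proposal follows essentially the same path as the paper's own sketch: push $A$ forward via the isometry $\Sigma_A$ to get a metric connection $\hat A$ on $\Lambda^+_A$, invoke uniqueness of the torsion-free metric connection on $\Lambda^+$ to identify $\hat A$ with the Levi--Civita connection, read off Einstein from the self-dual instanton condition and $\Rm_+$ definiteness from $F_A$, and reverse the construction for the converse. You supply a bit more explicit algebra (e.g.\ $Q(\mu_g)=\Rm_+^2$ and the rescaling to $R=\pm12$) than the paper's brief sketch, but the skeleton is identical.
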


The point is that the $g_A$-Levi-Civita connection $\nabla$ on $\Lambda^+_A$ is the \emph{unique} torsion-free metric connection, so when $\diff_A\Sigma_A=0$ it follows that $\hat{A}= \nabla$. In particular, $\nabla$ is a self-dual instanton (since $A$ is) and this is equivalent to $g_A$ being Einstein. Moreover, the curvature of $\nabla$ is identified with $F_A$ which is definite and this gives the condition on $\Rm_+$. For the converse, given an Einstein metric $g$ with $\Rm_+$ definite, one takes $A$ to be the Levi-Civita connection on $\Lambda^+$. The fact that $\Rm_+$ is definite ensures that $A$ is a definite connection, the fact that $g$ is Einstein implies that $g_A = g$ and the fact that the Levi-Civita connection is torsion free implies $\diff_A \Sigma_A = 0$. 

\begin{remark}\label{why-tr-sqrt-3}
We close this section with an a~posteriori justification of the choice of volume form, via $\tr\sqrt{Q}_A = 3$. When $A$ is torsion-free, we have $\sqrt{Q}_A = \pm \Rm_+$ and so $\tr \sqrt{Q}_A = |R|/4$. If we are to have any chance of finding Einstein metrics we see we are forced to take this to be constant, which we thus ensure from the very beginning.  
\end{remark}

\section{The proof of Theorem~\ref{main-result}}\label{the-proof}

We now give the proof of Theorem~\ref{main-result}, that Poincaré--Einstein 4-manifolds with either $\Rm_+< 0$ or $\Rm_-<0$ are non-degenerate. The main step is to show the analogous statement for torsion-free definite connections; see Theorem~\ref{connections-non-degenerate} in \S\ref{connections-put-all-together} below. In \S\ref{complete-the-proof}, we explain how this implies the result for metrics.

\subsection{Asymptotically-hyperbolic definite connections}

We begin by determining the appropriate class of definite connections on the interior $M$ of a compact 4-manifold $\overline{M}$ with boundary $X$, which give rise to conformally compact metrics. In fact, the metrics $g_A$ will be automatically \emph{asymptotically hyperbolic}, which is to say that the sectional curvatures of $g_A$ tend to $-1$ at infinity. This class of definite connections is also treated, with different motivations in \cite{Fine-Herfray-Krasnov-Scarinci}. 

Let $\overline{E} \to \overline{M}$ be an oriented $\SO(3)$-bundle; we write $E \to M$ for the restriction of $\overline{E}$ to the interior. We fix an $\SO(3)$-connection $\overline{A}$ in $\overline{E}$ and a boundary defining function $\rho$. Recall also that we use the metric and fibre-wise volume form in $E$ to identify $\overline{E} \cong \overline{E}^* \cong\so(\overline{E}) \cong so(\overline{E})^*$.
\begin{definition}
An $\SO(3)$-connection $A$ in $E \to M$ is called a \emph{asymptotically hyperbolic} if: 
\begin{itemize}
\item
$\rho (\overline{A} - A) \in \Omega^1(M,E)$ extends to a section $a \in \Omega^1(\overline{M},\overline{E})$;
\item
moreover, at each boundary point $x\in X$, $a(x) \colon T_xX \to \overline{E}_x$ is an isomorphism.
\end{itemize}
Note that the definition does not depend on the choice of $\overline{A}$ or $\rho$. The isomorphism $TX \to \overline{E}|_X$ does depend on $\rho$, but is uniquely determined by $A$ up to multiplication by a nowhere vanishing function. We can use $a|_X$ to pull back the metric from $\overline{E}|_X$ to $TX$; the conformal class of this metric is independent of the choice of $\overline{A}$ or $\rho$ and so in this way an asymptotically hyperbolic connection $A$ determines a conformal structure on the boundary, which we denote by $\gamma_A$.
\end{definition}

\begin{lemma}
Let $A$ be an asymptotically hyperbolic connection. Then: 
\begin{enumerate}
\item  
$A$ is negative definite near the boundary.
\item 
The corresponding metric $g_A$ is asymptotically hyperbolic with conformal infinity equal to $\gamma_A$. 
\end{enumerate}
\end{lemma}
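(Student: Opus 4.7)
The plan is to work in a local chart near a boundary point and compute the leading-order behaviour of $F_A$ and of the algebraic data determining $g_A$ as $\rho \to 0$. First I would choose a local orthonormal frame $e_1, e_2, e_3$ of $\overline{E}$ and write $A = \overline{A} - \rho^{-1}a$ with $a = \sum a_i \otimes e_i$. Expanding the curvature via
\[
F_A = F_{\overline{A}} - \diff_{\overline{A}}(\rho^{-1}a) + \tfrac{1}{2}[\rho^{-1}a \wedge \rho^{-1}a],
\]
one finds that $\rho^2 F_A$ extends smoothly to $\overline{M}$ with boundary value $F^0$ whose components are
\[
F^0_i = \diff\rho \wedge a_i|_X + \tfrac{1}{2}\epsilon_{ijk}\, a_j \wedge a_k|_X.
\]
All subleading terms are controlled by $F_{\overline{A}}$ and $\diff_{\overline{A}} a$ and do not affect the leading order.

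A short computation then gives $F^0_i \wedge F^0_j = 2\delta_{ij}\, \diff\rho \wedge a_1 \wedge a_2 \wedge a_3$, which is nowhere vanishing near $X$ since $a|_X$ is an isomorphism. Hence, taking $\mu = \rho^{-4}\, \diff\rho \wedge a_1 \wedge a_2 \wedge a_3$, we have $Q(\mu)_{ij} \to \delta_{ij}$ as $\rho \to 0$, so $Q(\mu)$ is positive definite on a neighbourhood of $X$. This proves that $A$ is a definite connection near $X$, identifies $\mu$ with the induced orientation, and---after the normalisation $\tr\sqrt{Q(\mu_A)} = 3$---gives $\mu_A = (1+O(\rho))\mu$. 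To identify $g_A$ I would observe that $F^0_i$ takes exactly the form $e^0 \wedge e^i + \tfrac{1}{2}\epsilon_{ijk}\, e^j \wedge e^k$ of a standard orthonormal basis of self-dual 2-forms for the coframe $(e^0,e^1,e^2,e^3) = (\diff\rho, a_1, a_2, a_3)$; taking the $\rho^{-2}$ rescaling into account this yields
\[
g_A = \rho^{-2}\Bigl(\diff\rho \otimes \diff\rho + \textstyle\sum_i a_i \otimes a_i\Bigr) + O(\rho^{-1}).
\]

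From this asymptotic form the remaining claims follow. The extended metric $\rho^2 g_A$ restricts on $X$ to $\sum_i a_i|_X \otimes a_i|_X$, which is the pull-back of the fibre metric on $\overline{E}|_X$ through $a|_X$ and therefore represents $\gamma_A$; hence $c(g_A) = \gamma_A$. Moreover $|\diff\rho|^2_{\rho^2 g_A}|_X = 1$, which is the standard sufficient condition for the sectional curvatures to tend to $-1$ at infinity. The sign of definiteness is finally fixed by computing $J_1J_2J_3$ in the explicit $(\diff\rho, a_i)$-coframe (or by comparison with the model on $\H^4$), which yields $J_1J_2J_3 = -\Id$, so that $A$ is \emph{negative} definite. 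The main obstacle is bookkeeping rather than analysis: one has to keep careful track of the sign conventions in the identification $\so(\overline{E}) \cong \overline{E}$ (which uses the orientation of $\overline{E}$) and in the definitions of the Hodge star and of the $J_i$, in order to distinguish positive from negative definiteness. Once the leading-order form of $\rho^2 F_A$ is in hand, all the algebra is essentially mechanical.
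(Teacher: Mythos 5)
Your proposal is correct and its core is the same as the paper's: expand $\rho^2 F_A$ near the boundary, observe that $Q(\mu) \to \delta_{ij}$, and read off the asymptotic form of $g_A$ from the leading curvature terms, which take the standard self-dual shape for an appropriate coframe. The one genuine difference is in how the coframe is produced. The paper first builds an adapted collar: it takes the kernel of $a|_X \colon T\overline{M}|_X \to \overline{E}|_X$ (a line bundle $N$ complementary to $TX$, since $a|_{TX}$ is an isomorphism), flows along a vector field spanning $N$, and thereby arranges that the $\diff r$-component of $\rho(\overline{A}-A)$ vanishes on $X$. In the resulting collar the leading coframe $(\diff r,\phi_1,\phi_2,\phi_3)$ is split into normal and tangential parts, which makes the identification of $g_A$ and $\gamma_A$ transparent. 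You instead work directly with the given $\rho$, so the $a_i$ may carry $\diff\rho$ components; what rescues the argument is that $(\diff\rho,a_1,a_2,a_3)$ is nonetheless a genuine coframe of $T^*\overline{M}$ along $X$ (linear independence of $\diff\rho$ from the $a_i$ follows by restricting to $TX$), and the algebraic identity $F^0_i = \diff\rho\wedge a_i + \tfrac{1}{2}\epsilon_{ijk}a_j\wedge a_k$ is a standard self-dual basis relative to \emph{that} coframe regardless of how it sits over the chosen collar. It would be worth stating this point explicitly, since a reader might otherwise worry that the split $a_i = a_i(\del_\rho)\,\diff\rho + \bar a_i$ spoils the leading form. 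Once noted, the two remaining claims follow exactly as you say: restricting $\rho^2 g_A$ to $TX$ kills $\diff\rho$ and recovers $\gamma_A$, and $|\diff\rho|^2_{\rho^2 g_A}|_X = 1$ is automatic from orthonormality of the coframe. Your treatment of the sign (checking $J_1J_2J_3 = -\Id$ or comparing with $\H^4$) is at the same level of detail as the paper's ``one can check''.

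Two minor points to tidy up if you wrote this out in full. First, the convergence $Q(\mu) \to \delta_{ij}$ requires noting that $a$ and $F_{\overline{A}}$ are smooth up to $\overline{M}$, so that the $O(\rho)$ error is uniform; this is implicit but should be said. Second, you should check that $\diff\rho\wedge a_1\wedge a_2\wedge a_3$ is nowhere vanishing on a full collar (not just on $X$), which again follows from smoothness of $a$ and non-vanishing on the boundary.
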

\begin{proof}
We begin by choosing a collar neighbourhood of the boundary which is well adapted to $A$. To do this, consider the  homomorphism $T\overline{M}|_X \to \overline{E}|_X$ given by the 1-form $\rho(\overline{A} - A)$ at points on the boundary. Since $A$ is asymptotically hyperbolic, the kernel of this map is a line sub-bundle $N\subset T\overline{M}|_X$, uniquely determined by $A$, with $T\overline{M}|_X \cong TX \oplus N$. Choose a vector field $\nu$ on $\overline{M}$, which spans $N$ at points of $X$ and points inwards. Flowing along $\nu$ defines a collar neighbourhood $U \cong X \times [0,\epsilon)$ of $X$ and the isomorphism $T\overline{M} \cong TX \oplus N$ extends in an obvious way to this collar, with $N = \left\langle \nu \right\rangle$. 

We next expand $A$ in this collar. Write $r \colon U \to [0,\epsilon)$ for the projection from the collar onto the $[0,\epsilon)$ factor. Over $U$ we have $A = \overline{A} + r^{-1}\alpha$ where $\alpha$ extends up to the boundary. Write $\alpha = \diff r \otimes b + c$ where $b=\alpha(\nu)$ and $c$ is an $E$-valued 1-form which vanishes on $N$. By construction, $b$ vanishes on the boundary. Meanwhile, on the boundary $c \colon TX \to \overline{E}|_X$ is an isomorphism. It follows that $\overline{E}$ is trivial over $X$ (since $TX$ is) and hence over the whole collar. We choose an orthonormal frame $e_i$ of $\overline{E}$ over the collar and write $c = \sum c_i(r) \otimes e_i$ for a path of 1-forms $c_i(r)$ on $X$. To ease the notation we write $\phi_i =c_i(0)$. The $\phi_i$ are a coframe for $T^*X$ and we define a metric on $X$ by declaring the $\phi_i$ to be orthonormal. This metric represents the conformal class $\gamma_A$ determined by $A$.   

Consider now the asymptotically hyperbolic metric $g = r^{-2}\left(\diff r^2 + \phi_1^2 + \phi_2^2 + \phi^2_3\right)$ defined on the collar, with conformal infinity $\gamma_A$. We will show that $A$ is definite near the boundary and that the metric it induces is $g_A = g + O(r)$, where $O(r)$ is measured with respect to $g$.

The curvature of $A$ is given by
\begin{equation*}
F_A 
  =
    -r^{-2} \left(\diff r \wedge \alpha - [\alpha \wedge \alpha]\right)
    + 
    r^{-1}\diff_{\overline{A}}\alpha
    +
    F_{\overline{A}}
\end{equation*}
Recalling that $b$ vanishes on the boundary, we see that $F_A = \sum F_i \otimes e_i$ where
\begin{align}
F_1
  &=
    r^{-2}\left(\phi_1 \wedge \diff r + \phi_2 \wedge \phi_3\right) + O(r),\label{F1}\\
F_2
  &=
    r^{-2}\left(\phi_2 \wedge \diff r + \phi_3 \wedge \phi_1\right) + O(r), \label{F2}\\
F_3
  &=
    r^{-2}\left(\phi_3 \wedge \diff r + \phi_1 \wedge \phi_2\right) + O(r) \label{F3}.
\end{align}
Here the first term in each $F_i$ is unit-length with respect to the metric $g$ and $O(r)$ is measured with respect to $g$.

Let $\mu = r^{-4}\phi_1\wedge\phi_2 \wedge \phi_3 \wedge \diff r$ be the volume form of $g$, then 
\[
Q(\mu)_{ij} = \delta_{ij} + O(r).
\]
It follows that $A$ is definite near the boundary and one can check that the sign of the triple $F_i$ is negative. Moreover the actual volume form determined by $A$ is $\mu_A = \mu + O(r)$. Meanwhile, the leading term in each curvature form \eqref{F1}, \eqref{F2}, \eqref{F3} is self-dual with respect to $g$. It follows that $g_A$ and $g$ agree to $O(r)$. In particular, $g_A$ is also asymptotically hyperbolic and it has conformal infinity $\gamma_A$.
\end{proof}

\subsection{Infinitesimal deformations of torsion-free definite connections}
\label{connections-put-all-together}

Let $A$ be a torsion-fee asymptotically-hyperbolic definite connection and $\delta A = a$ an infinitesimal deformation preserving the torsion-free condtion. In this subsection, we prove Theorem \ref{connections-non-degenerate}: that if $a \in L^2$ then $a$ is pure gauge.  The strategy of the proof is as follows.
\begin{itemize}
\item
We start with Lemma~\ref{integral-condition}, which gives a technical integral condition which $a$ must satisfy. 
\item
Definition~\ref{spin-Coulomb-gauge} introduces a judicious choice of gauge, combining gauge fixing for both vertical and horizontal infinitesimal gauge transformations. In Proposition~\ref{effect-of-gauge} we show that when $a$ is fully gauge fixed, the integral condition of Lemma~\ref{integral-condition} actually forces $a$ to vanish identically.
\item
In Proposition~\ref{gauge-fixing} we show that any $a \in L^2$ can be written as a sum of a piece which is fully gauge fixed and a piece which is pure gauge. From here we can complete the proof of Theorem~\ref{connections-non-degenerate}.
\end{itemize}
We begin by fixing our notation. Let $D_A$ denote the linearisation at $A$ of the map $A \mapsto \diff_A \Sigma_A$. We are interested in $a \in C^\infty(M,\Lambda^1\otimes E)$ for which $D_A(a)=0$. 

We work in a local oriented orthonormal frame $e_1, e_2, e_3$ for $E$. We write the dual frame as $e^i$ and we denote by $\hat{e}^i$ the generator of positive rotations about~$e_i$. We use the summation convention throughout. We write the connection $A$ locally as
\begin{equation}\label{A-local}
\nabla_A (e_i) = \epsilon_{ijk}A^j \otimes e^k
\end{equation}
for a triple of 1-forms $A^j$. If $a = a^i \otimes e_i$ is an $E$-valued 1-form then our conventions give that $\diff_A a = (\diff_A a)^i\otimes e_i$ where 
\[
(\diff_A a)^i = \diff a^i - \epsilon^{i}_{\phantom{i}jk}A^j \wedge a^k
\]
The curvature is determined locally by the triple of 2-forms
\begin{equation}\label{F-local}
F^i = \diff A^i -\frac{1}{2}\epsilon^{i}_{\phantom{i}jk}A^j \wedge A^k
\end{equation}
One can check that $(\diff_A)^2(e_i) = \epsilon_{ij}^{\phantom{ij}k}F^j \otimes e_k$. 
It is convenient to write
\[
\left(\sqrt{Q}_A\right)^{ij} = -\Psi^{ij} + \delta^{ij}
\]
where $\Psi$ is a trace-free symmetric matrix. (Recall that $\tr \sqrt{Q} = 3$ which is why the trace part here is $\delta^{ij}$.) We write $\Sigma_A = \Sigma_i \otimes e^i$. The equation $F_A=-\sqrt{Q}_A \Sigma_A$ (the sign appears because we are dealing with a negative definite connection) has the following local expression:
\begin{equation}\label{F-from-Sigma}
F^i 
	= 
		\left( \Psi - \delta\right) ^{ij}
		\Sigma_j
\end{equation}
To ease notation we write $P = (\Psi - \delta)^{-1}$. (The inverse exists because $A$ is definite.) 

\begin{lemma}\label{integral-condition}
Let $A$ be a torsion-free asymptotically-hyperbolic definite connection and $\delta A = a \in C^{\infty}(M,\Lambda^1 \otimes E)$ an infinitesimal deformation of $A$ which preserves the torsion-free condition, i.e.\ with $D_A(a)=0$, and which also lies in $W^{2,1}$. Then,
\[
\int_M \left[P_{ik}\left(
(\diff_A a)^k - \phi^{kj}\Sigma_j\right)
\wedge (\diff_A a)^i
- \epsilon^i_{\phantom{i}jk}a^j \wedge a^k \wedge \Sigma_i\right]
	=
		0
\]	
where $\phi = \delta \Psi$ denotes the corresponding infinitesimal change in $\Psi$ under $\delta A = a$. 
\end{lemma}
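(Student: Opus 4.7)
My strategy is to identify the integrand as a total derivative plus a cubic correction in $a$, and then to apply Stokes' theorem on the non-compact $M$, with the boundary contribution killed by the $W^{2,1}$ decay.

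The first step is to interpret the quantity $P_{ik}\bigl((\diff_A a)^k-\phi^{kj}\Sigma_j\bigr)$ intrinsically. Differentiating the defining relation~\eqref{F-from-Sigma}, $F^i=(\Psi-\delta)^{ij}\Sigma_j$, and using $\delta F^i=(\diff_A a)^i$, $\delta\Psi=\phi$, one finds $(\diff_A a)^i=\phi^{ij}\Sigma_j+(\Psi-\delta)^{ij}\delta\Sigma_j$. Multiplying by $P$ inverts $\Psi-\delta$ and yields $\delta\Sigma_i=P_{ik}\bigl((\diff_A a)^k-\phi^{kj}\Sigma_j\bigr)$. So the integrand reads $\delta\Sigma_i\wedge(\diff_A a)^i-\epsilon^i_{\phantom{i}jk}a^j\wedge a^k\wedge\Sigma_i$, summed over~$i$.

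Second, I would exploit the linearised torsion-free condition. Varying $\diff_A\Sigma_A=0$ and using $D_A(a)=0$ gives $(\diff_A\delta\Sigma_A)^i=\epsilon^i_{\phantom{i}jk}a^j\wedge\Sigma_k$. The Leibniz rule for $\diff_A$ applied to the pairing of the $E$-valued 2-form $\delta\Sigma_A$ with the $E$-valued 1-form $a$ (no sign, since $\delta\Sigma_A$ has even degree) reads
\begin{equation*}
\diff\bigl(\delta\Sigma_i\wedge a^i\bigr) = (\diff_A\delta\Sigma_A)^i\wedge a^i + \delta\Sigma_i\wedge(\diff_A a)^i.
\end{equation*}
Substituting for $\diff_A\delta\Sigma_A$ and reorganising the wedge $a^j\wedge\Sigma_k\wedge a^i$ (move $a^i$ past the 2-form $\Sigma_k$, swap $a^j,a^i$, and relabel dummy indices using the cyclic invariance of $\epsilon$) one obtains $(\diff_A\delta\Sigma_A)^i\wedge a^i=-\epsilon^i_{\phantom{i}jk}a^j\wedge a^k\wedge\Sigma_i$, and hence the pointwise identity
\begin{equation*}
\delta\Sigma_i\wedge(\diff_A a)^i - \epsilon^i_{\phantom{i}jk}a^j\wedge a^k\wedge\Sigma_i = \diff\bigl(\delta\Sigma_i\wedge a^i\bigr).
\end{equation*}
The claimed integral identity is therefore equivalent to $\int_M\diff(\delta\Sigma_i\wedge a^i)=0$.

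The remaining step, and the main obstacle, is the analytic justification of Stokes on the non-compact~$M$. From $a\in W^{2,1}$ one has $a,\diff_A a\in L^2$, and since the matrix $P$, the coefficients expressing $\phi$ linearly in $\diff_A a$, and $\Sigma_A$ itself are all pointwise bounded up to the boundary (a direct consequence of the asymptotic normal form for an asymptotically hyperbolic definite connection established in the previous subsection), one obtains $\delta\Sigma_A\in L^2$. Hence the 3-form $\omega:=\delta\Sigma_i\wedge a^i$ lies in $L^1$ with $\diff\omega\in L^1$. I would finish with a standard cutoff argument: choose $\chi_N\in C_c^\infty(\overline{M}\setminus X)$ with $\chi_N\to 1$ on $M$ and $|\diff\chi_N|_g$ uniformly bounded with support shrinking into $X$ (such a family exists because $g_A$ is conformally compact, e.g.\ $\chi_N(\rho)=\chi(\log(N\rho))$ for a fixed profile), write $0=\int_M\diff(\chi_N\omega)=\int_M\chi_N\diff\omega+\int_M\diff\chi_N\wedge\omega$, and pass to the limit using dominated convergence on both pieces. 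The real content of the argument is thus ensuring that the $W^{2,1}$ control on $a$ really does propagate to $L^2$ control on $\delta\Sigma_A$; everything else is bookkeeping.
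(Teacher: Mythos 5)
Your proof is correct, and at its algebraic core it is the same argument as the paper's, reorganised into a more direct and analytically cleaner form. The paper introduces the auxiliary functional $q_A(b)=\int_M\Sigma_i\wedge(\diff_A b)^i$, asserts via ``integration by parts'' that it vanishes for every $b\in W^{2,1}$ whenever $A$ is torsion-free, and then differentiates in the direction $\delta A=a$ to obtain the bilinear form $h_A(a,b)$, which must also vanish when $D_A(a)=0$; the claimed identity is $h_A(a,a)=0$. Unrolling that derivative produces exactly your computation: the identification $\delta\Sigma_i = P_{ik}\bigl((\diff_A a)^k-\phi^{kj}\Sigma_j\bigr)$ (paper's eq.~\eqref{sigma-from-a}), the linearised torsion-free relation $(\diff_A\delta\Sigma_A)^i=\epsilon^i_{\phantom{i}jk}a^j\wedge\Sigma_k$, and the gauge-covariant Leibniz rule exhibiting the integrand as $\diff(\delta\Sigma_i\wedge a^i)$. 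What you add, and what the paper glosses over, is the justification of the Stokes step on the non-compact $M$. In fact $q_A(b)$ itself is not obviously absolutely convergent: $|\Sigma_i\wedge(\diff_A b)^i|\lesssim|\diff_A b|\in L^2$, and on the infinite-volume $M$ this does not give $L^1$. By contrast the primitive you work with, $\omega=\delta\Sigma_i\wedge a^i$, genuinely lies in $L^1$ by Cauchy--Schwarz once one has $\delta\Sigma_A\in L^2$ (which, as you correctly note, follows from $\diff_A a\in L^2$ together with boundedness up to the boundary of $P=-\sqrt{Q_A}^{-1}$, of $\Sigma_A$, and of the algebraic coefficients expressing $\phi=\delta\Psi$ in terms of $\delta F_A=\diff_A a$; all of these approach definite limits as $Q_A\to\Id$). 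Your logarithmic cutoff then has $|\diff\chi_N|_{g_A}$ uniformly bounded with shrinking support, so both $\int\chi_N\diff\omega\to\int\diff\omega$ and $\int\diff\chi_N\wedge\omega\to 0$ follow from dominated convergence and $\omega,\diff\omega\in L^1$. So your route is both essentially the paper's and a careful repair of its one soft spot.
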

\begin{proof}
Given an asymptotically hyperbolic connection $A$ and $b \in W^{2,1}(M,\Lambda^1\otimes E)$, consider the following quantity
\begin{equation}\label{key-quantity}
q_A(b) = \int_M \Sigma_i\wedge (\diff_A b)^i
\end{equation}
Integrating by parts to put the derivative $\diff_A$ on $\Sigma_A$ we see that when $A$ is torsion-free, $q_A(b)=0$ for all $b$.. Write $h_A(a,b)$ for the derivative of $q_A(b)$ with respect to $A$ in the direction $\delta A^i = a^i$.  

To compute $h_A(a,b)$ we first denote the corresponding infinitesimal change in $\Sigma$ by $\delta \Sigma_i = \sigma_i$ and in $\Psi$ by $\delta \Psi^{ij} = \phi^{ij}$.
Differentiating \eqref{F-local} and \eqref{F-from-Sigma} we see that
\[
(\diff_A a)^i -
	\left(\Psi - \delta\right)^{ij} \sigma_j - \phi^{ij}\Sigma_j
	=
		0
\]
This rearranges to give
\begin{equation}\label{sigma-from-a}
\sigma_i 
	= 
		P_{ik}\left(
		(\diff_A a)^k - \phi^{kj}\Sigma_j
		\right).
\end{equation}	
Differentiating formula \eqref{key-quantity} we see that for any $a, b \in W^{2,1}$, we have
\begin{equation}\label{h-definition}
h_A(a,b)	
=
\int_M \left[P_{ik}\left(
(\diff_A a)^k - \phi^{kj}\Sigma_j\right)
\wedge (\diff_A b)^i
- \epsilon^i_{\phantom{i}jk}a^j \wedge b^k \wedge \Sigma_i\right].
\end{equation}
Since in our case $\delta A=a$ preserves the condition $\diff_A \Sigma_A =0$ we must have $h_A(a,b)=0$ for all $b \in W^{2,1}$. In particular, $h_A(a,a)=0$ which is the identity in the statement of the Lemma.
\end{proof}



\begin{definition}\label{spin-Coulomb-gauge}
Let $a \in \Gamma(M,\Lambda^1\otimes E)$ and $A$ be a torsion-free definite connection.
\begin{itemize}
\item
We say that $a$ is in \emph{vertical gauge with respect to $A$} if
\begin{equation}
\Sigma_i \wedge a^i = 0.
\label{spin-gauge}	
\end{equation}
\item
We say that \emph{$a$ is in horizontal gauge with respect to $A$} if, in addition,
\begin{equation}
\epsilon^{ij}_{\phantom{ij}k}\Sigma_j \wedge (\diff_A a)^k = 0.
\label{Coulomb-gauge}	
\end{equation}
\item
When $a$ is in both vertical and horiztonal gauge with respect to $A$ we say that \emph{$a$ is fully gauge fixed with respect to $A$}.
\end{itemize}
\end{definition}

\begin{proposition}\label{effect-of-gauge}
Let $A$ be a torsion-free asymptotically-hyperbolic definite connection and $\delta A =a$ an infinitesimial deformation of $A$ which lies in $W^{2,1}$ and which preserves the torsion-free condition, i.e.\ with $D_A(a)=0$. If $a$ is fully gauge fixed with respect to $A$ then in fact $a=0$.
\end{proposition}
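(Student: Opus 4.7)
My plan is to reduce the integral identity of Lemma~\ref{integral-condition} to a pointwise sign-definite expression by using both gauge conditions, thereby forcing $a$ to vanish pointwise. The crucial algebraic input is that $P = -\sqrt{Q}_A^{-1}$ is a pointwise negative-definite endomorphism of $E$ (since $A$ is negative definite), which supplies the controlling sign.

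Concretely I would first decompose $(\diff_A a)^i = M^{ij}\Sigma_j + N^i$ into self-dual and anti-self-dual parts with respect to $g_A$. The orthonormality $\Sigma_j\wedge\Sigma_l = \delta_{jl}\mu_A$ and the vanishing $\Sigma_j\wedge N^i=0$ then reduce every wedge product in the integrand to an algebraic expression in $M$, $N$ and $\phi$. Horizontal gauge~\eqref{Coulomb-gauge} translates directly into the condition that the $3\times 3$ matrix $M^{ij}$ is symmetric, since
\[
\epsilon^{ij}_{\phantom{ij}k}\Sigma_j\wedge(\diff_A a)^k \;=\; \epsilon^{ij}_{\phantom{ij}k}M^{kj}\mu_A.
\]
The infinitesimal variation $\phi$ can be written algebraically in terms of $M$ and the background $\Psi$ by differentiating the identity $\sqrt{Q}_A\cdot\sqrt{Q}_A=Q_A$ together with the normalisation $\tr\sqrt{Q}_A=3$; to leading order near infinity this gives $\phi = \tfrac{1}{2}(M-\tfrac{1}{3}(\tr M)I)$, with perturbative corrections in $\Psi$ away from the boundary. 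In parallel, vertical gauge~\eqref{spin-gauge} confines $a$ to a rank-$8$ subbundle $W\subset\Lambda^1\otimes E$, on which the cubic invariant $\epsilon^i_{\phantom{i}jk}a^j\wedge a^k\wedge\Sigma_i$ reduces to a specific pointwise quadratic form in the components of $a$.

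Assembling these ingredients, the integrand of the lemma becomes a pointwise quadratic form in $(M,N,a)$. The self-dual block contributes $\tr(PM^2)\le 0$ (with equality iff $M=0$) and the anti-self-dual block contributes $-\tr(PG)\ge 0$ where $G_{km}=\langle N^k,N^m\rangle$; in isolation these have opposite signs, so neither the Lemma nor a crude pointwise estimate can conclude. The principal obstacle, and the heart of the proof, is to verify that the $\phi$-correction $P_{ik}\phi^{kj}M^{ij}$ together with the cubic $\epsilon^i_{\phantom{i}jk}a^j\wedge a^k\wedge\Sigma_i$ --- expanded in both gauges --- combine with the $M$- and $N$-blocks into an expression with one uniform sign. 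I expect this to reproduce, in the asymptotically hyperbolic setting, the algebraic identity at the core of the analogous compact argument in \cite{FKS}; the negative-definiteness of $P$ (equivalently of $A$) is precisely what furnishes the required uniform sign. Once pointwise sign-definiteness is established, the vanishing of the integral forces each pointwise contribution to vanish, giving $M=N=0$ and, since the cubic term becomes a definite quadratic form on $W$, also $a=0$.
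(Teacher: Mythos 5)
Your strategy is the right one in broad outline, and you correctly identify both the negative-definiteness of $P$ as the engine of the sign argument and the fact that the crucial step is to control the self-dual block. But the proof has a genuine gap at exactly that step, and the formula you propose for $\phi$ is wrong in a way that would defeat the argument rather than close it.

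The key algebraic fact --- the content of the paper's Lemma~\ref{dminus} --- is that when $a$ is \emph{fully} gauge fixed, the self-dual part of $\diff_A a$ is \emph{exactly} $\phi^{ij}\Sigma_j$, i.e.\ in your notation $M = \phi$ with no error term. Consequently $(\diff_A a)^k - \phi^{kj}\Sigma_j = (\diff_A^- a)^k$ is purely anti-self-dual, so the entire self-dual block of the integrand vanishes identically and the integral reduces (using Lemma~\ref{star-lemma} for the cubic term) to
\[
\int_M \left[\,-P_{ij}\langle (\diff_A^- a)^i, (\diff_A^- a)^j\rangle + |a|^2\,\right]\mu_A = 0,
\]
from which $a=0$ is immediate since $P<0$. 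Your proposed relation $\phi = \tfrac{1}{2}\bigl(M - \tfrac{1}{3}(\tr M)\Id\bigr)$ ``to leading order near infinity'' is not what the gauge conditions give, and if one feeds it into your decomposition the self-dual contribution becomes $\tr\bigl(PM(M-\phi)\bigr) \approx \tfrac{1}{2}\tr(PM^2) \le 0$ --- the \emph{wrong} sign relative to the other two terms, so no conclusion follows. Differentiating $\sqrt{Q}_A\sqrt{Q}_A = Q_A$ will not close the gap either: it relates $\phi$ to $\delta F_A = \diff_A a$ and $\delta\mu_A$, but not directly to the self-dual block in the form you need. What actually pins down $M=\phi$ is the following: differentiating $F_A=(\Psi-\delta)\Sigma_A$ gives $(\diff_A^+ a)^i = \phi^{ij}\Sigma_j + (\Psi-\delta)^{ij}\sigma_j^+$, and writing $\sigma_i^+ = Z_i^{\phantom{i}j}\Sigma_j$ one shows $Z=0$ by combining: (i) orthonormality of the $\Sigma_i$ kills the symmetric trace-free part of $Z$; (ii) horizontal gauge kills the skew part of $M$; (iii) $\diff_A$ applied to vertical gauge kills $\tr M$; and (iv) the resulting anticommutation relation $(\Psi-\delta)^{-1}S(\Psi-\delta) + S = 0$ for the remaining skew part $S$ of $Z$ forces $S=0$ because $\Psi-\delta$ is negative definite. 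Step (iv) is where the chiral curvature hypothesis enters this lemma, and it is precisely the algebraic content missing from your outline.
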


To prove this we begin with a couple of Lemmas. The first appears in \cite{FKS} but the proof is so short we repeat it here for convenience.

\begin{lemma}\label{star-lemma}
Let $A$ be torsion-free and $a$ be in vertical gauge with respect to $A$. Then
\begin{align}
* a^i 
	&= 
		\epsilon^{ij}_{\phantom{ij}k}\Sigma_j \wedge a^k,
		\label{Hodge-star-in-spin-gauge}\\
(\diff_A^*a)^i
	&=
		- * \left(
		\epsilon^{ij}_{\phantom{ij}k}
		\Sigma_j \wedge (\diff_A a)^k
		\right).
		\label{dstar-in-spin-gauge}
\end{align}
In particular, when $a$ is in vertical gauge, condition \eqref{Coulomb-gauge} agrees with the  usual definition $\diff_A^*a=0$ of Coulomb gauge.
\end{lemma}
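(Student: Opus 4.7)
Both equalities are pointwise, $C^\infty(M)$-linear identities in $a$, so the strategy is to verify them at a single point in a carefully chosen frame. The first identity~\eqref{Hodge-star-in-spin-gauge} is a purely algebraic relation between $*a^i$ and the triple $(\Sigma_j \wedge a^k)$; the second follows by combining the first with $\diff_A^* = -*\diff_A *$ and the torsion-free condition $\diff_A \Sigma_A=0$. The final sentence of the lemma is then immediate from~\eqref{dstar-in-spin-gauge} since $*$ is an isomorphism.

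For~\eqref{Hodge-star-in-spin-gauge} I would fix a point $p\in M$ and choose an oriented $g_A$-orthonormal coframe $e^0,\dots,e^3$ of $T^*_pM$ for which the triple $\Sigma_i$ takes its standard form (up to the overall normalisation fixed by $\tr\sqrt{Q_A}=3$), for instance $\Sigma_1\propto -e^{01}+e^{23}$ together with its cyclic permutations. Writing $a^i=a^i_\mu e^\mu$, the vertical-gauge condition $\sum_i \Sigma_i\wedge a^i=0$ expands into four linear equations on the twelve coefficients $a^i_\mu$, one for each basis $3$-form $e^{\mu\nu\rho}$. Computing $*a^i$ and $\epsilon^{ij}_{\phantom{ij}k}\Sigma_j\wedge a^k$ in the same basis and comparing coefficients, the identity~\eqref{Hodge-star-in-spin-gauge} is seen to be precisely equivalent to those four vertical-gauge equations (term by term, for each fixed $i$). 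The check is mechanical and, as noted in the lemma, also appears in~\cite{FKS}.

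For~\eqref{dstar-in-spin-gauge} I would write $(\diff_A^*a)^i = -*(\diff_A(*a))^i$ and substitute $*a^k = \epsilon^{kl}_{\phantom{kl}m}\Sigma_l\wedge a^m$ using the first identity. Expanding $\diff_A$ by the graded Leibniz rule produces a $\diff\Sigma_l$ and a $\diff a^m$ term; I replace $\diff\Sigma_l$ by $\epsilon^l_{\phantom{l}pq}A^p\wedge\Sigma_q$ using the local form of $\diff_A\Sigma_A=0$, and $\diff a^m$ by $(\diff_A a)^m + \epsilon^m_{\phantom{m}rs}A^r\wedge a^s$. This yields several cubic connection contributions. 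Applying the three-dimensional contraction identity $\sum_k\epsilon_{ijk}\epsilon_{klm} = \delta_{il}\delta_{jm} - \delta_{im}\delta_{jl}$, every such contribution can be rewritten as a sum of terms, each of which is either proportional to $A^i\wedge(\sum_l\Sigma_l\wedge a^l)$ (and hence vanishes by vertical gauge) or cancels pairwise with the explicit $-\epsilon^i_{\phantom{i}jk}A^j\wedge *a^k$ term coming from $\diff_A(*a)$ when $*a^k$ is also rewritten via the first identity. What survives is $\diff_A(*a)^i = \epsilon^{ij}_{\phantom{ij}k}\Sigma_j\wedge(\diff_A a)^k$, from which~\eqref{dstar-in-spin-gauge} follows upon applying $-*$.

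The main obstacle is convention bookkeeping: the Hodge star, the Levi--Civita symbol on $E$, the orientation induced by $A$, and the normalisation of $\Sigma_A$ fixed by~\eqref{Sigma-definition} and $\tr\sqrt{Q_A}=3$ must all be kept consistent, and the graded Leibniz rule contributes further $(-1)^{|\alpha|}$ factors that must be tracked carefully. Once these are correctly aligned the argument is purely algebraic and requires no analytic input.
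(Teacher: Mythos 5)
Your plan is correct in outline but noticeably heavier than the paper's own proof, which goes through the almost complex structures $J_i$ from \eqref{J-definition} rather than a coordinate check. For \eqref{Hodge-star-in-spin-gauge} the paper observes that the vertical-gauge condition $\Sigma_i\wedge a^i=0$ is the same as $J_i a^i=0$; applying $J_1$ (say) and using the quaternion relations $J_1J_2=J_3$, $J_1J_3=-J_2$ (which is exactly what the normalisation $J_1J_2J_3=-\Id$ buys) immediately gives $a^i=-\epsilon^{ij}_{\phantom{ij}k}J_j a^k$, and one more application of $*$ (with $*^2=-1$ on $3$-forms) unwinds $J_j$ back into the $\Sigma_j\wedge{}$ form. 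That is three lines, frame-free, and with no coefficient bookkeeping. Your coordinate verification at a point would also work, but note the phrase ``precisely equivalent to those four vertical-gauge equations'' is a little loose: the identity is twelve scalar equations, while vertical gauge is only four, so the logical content is an implication, not an equivalence — the map $a\mapsto *a - \epsilon\,\Sigma\wedge a$ has rank $4$ and is carried by $p$, not each fixed $i$ separately.

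For \eqref{dstar-in-spin-gauge} the paper's route is again much lighter, and for a structural reason worth internalising: $\epsilon$ is a $\nabla_A$-parallel tensor on $E$, so one can apply the \emph{covariant} graded Leibniz rule to the whole $E$-valued identity $*a = \epsilon_{ijk}\Sigma_j\wedge a^k\otimes e_i$ at once, getting
$\diff_A(*a)^i = \epsilon^{ij}_{\phantom{ij}k}\bigl[(\diff_A\Sigma)_j\wedge a^k + \Sigma_j\wedge(\diff_A a)^k\bigr]$.
The first bracket dies by the torsion-free condition, and $-*$ of what remains is \eqref{dstar-in-spin-gauge}. No cubic connection terms ever appear. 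Your plan instead expands $\diff_A$ into $\diff$ plus bare connection terms and then reassembles; the cancellations you describe do happen and the answer comes out the same, but the attribution is slightly off: if you carry the $\epsilon\epsilon\to\delta\delta-\delta\delta$ contraction through, all three families of connection terms cancel pairwise \emph{identically}, without ever invoking vertical gauge. (This is exactly what parallelism of $\epsilon$ encodes.) So the vertical-gauge condition is needed only for the first identity, not the second. In short: your argument is correct once the bookkeeping is carried out, but it re-derives, by hand, the two facts — the quaternion relations and the covariant Leibniz rule with parallel $\epsilon$ — that the paper uses to make the proof nearly computation-free.
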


\begin{proof}
Recall the almost complex structures $J_i$ from \eqref{J-definition}, which were used in determining the sign of a definite connection. The vertical gauge condition \eqref{spin-gauge} gives $J_ia^i=0$ which, by the quaternion relations, is equivalent to $a^i = - \epsilon^{ij}_{\phantom{ij}k}J_j a^k$. Applying the Hodge star $*$ and using the fact that $*^2=-1$ on 3-forms gives \eqref{Hodge-star-in-spin-gauge}. Now applying $\diff_A$ and using the fact that $\diff_A\Sigma_A=0$  and $\diff_A^* = -*\diff_A *$ gives \eqref{dstar-in-spin-gauge}.
\end{proof}


\begin{lemma}\label{dminus}
Let $A$ be a torsion-free negative-definite connection. Let $\delta A = a$ be fully gauge fixed with respect to $A$. As above, write $\phi = \delta \Psi$ for the corresponding infinitesimal change in $\Psi$. Then
\[
(\diff_A^- a)^i = (\diff_A a)^i - \phi^{ij}\Sigma_j
\]	
\end{lemma}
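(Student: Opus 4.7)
The plan is to expand $(\diff_A a)^i$ using \eqref{sigma-from-a}, which rearranges (using that $P = (\Psi-\delta)^{-1}$) to
\[
(\diff_A a)^i = \phi^{ij}\Sigma_j + (\Psi-\delta)^{ij}\sigma_j,
\]
where $\sigma_j = \delta\Sigma_j$. Since $\Psi-\delta$ is invertible, the lemma is equivalent to the statement that $\sigma_j$ is anti-self-dual (with respect to $g_A$) for every $j$. Writing the self-dual part as $\sigma_j^+ = s_{jk}\Sigma_k$, the proof therefore reduces to showing $s = 0$.

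A first observation is that differentiating the orthonormality $\Sigma_j\wedge\Sigma_k = \delta_{jk}\mu_A$ gives $s_{jk}+s_{kj} = \delta_{jk}\alpha$ with $\alpha = \dot{\mu}_A/\mu_A$. Hence the symmetric part of $s$ is $\tfrac{\alpha}{2}\delta_{jk}$, and I may write the antisymmetric part as $\beta_{jk}=\epsilon_{jkm}b^m$ for a vector-valued function $b$. My strategy is to show that the vertical gauge condition controls $\alpha$ and that the horizontal gauge condition controls $b$, via two independent contractions of the expansion of $(\diff_A a)^i$.

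For the trace part, expanding $\sum_i(\diff_A a)^i \wedge \Sigma_i$ and using $\tr\phi = 0$ (automatic from $\tr\sqrt{Q}_A=3$) together with $\tr(\Psi-\delta)=-3$ gives $\sum_i (\diff_A a)^i \wedge \Sigma_i = -\tfrac{3\alpha}{2}\mu_A$. On the other hand, the torsion-free hypothesis enters via a Leibniz calculation: using $\diff_A\Sigma_A = 0$ together with $A$-parallelism of the inner product on $E$ one finds $\sum_i (\diff_A a)^i \wedge \Sigma_i = \diff\bigl(\sum_i a^i\wedge\Sigma_i\bigr)$, the connection terms cancelling pairwise under a careful index swap. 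The vertical gauge condition $\sum_i \Sigma_i\wedge a^i = 0$ therefore forces $\alpha=0$. For the antisymmetric part, contracting $\Sigma^j \wedge (\diff_A a)^k$ with $\epsilon_{ljk}$ kills the $\phi$-piece by symmetry, and (with $\alpha = 0$) only the $\beta$-contribution from $s$ survives; the standard $\epsilon\epsilon$-identity then collapses this to
\[
\epsilon_{ljk}\Sigma^j \wedge (\diff_A a)^k = ((\Psi + 2\delta)b)^l\,\mu_A.
\]
The horizontal gauge condition annihilates the left-hand side, leaving $(\Psi + 2\delta)b = 0$.

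The main obstacle at this final step is to verify that $\Psi + 2\delta$ is invertible, which is where the negative-definite hypothesis on $A$ is used crucially. Since $\sqrt{Q}_A = \delta - \Psi$ is positive definite, the eigenvalues of $\Psi$ all satisfy $\psi_i < 1$; combined with $\tr\Psi = 0$ this forces $\psi_i > -2$ for each $i$, so $\Psi + 2\delta$ is positive definite. One concludes $b=0$, hence $\beta = 0$ and $s=0$, and therefore $\sigma_j$ is anti-self-dual for every $j$, which is the content of the lemma.
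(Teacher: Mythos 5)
Your proof is correct and tracks the paper's argument in its overall structure: both start from the linearisation of $F_A = (\Psi-\delta)\Sigma_A$, reduce the lemma to showing $\sigma^+ = 0$, exploit the differentiated orthonormality of $\Sigma_i$ to constrain the symmetric part of $s$ to a multiple of the identity, and use the two gauge conditions (equivalently, $\tr N = 0$ and $N$ symmetric where $(\diff_A^+ a)^i = N^{ij}\Sigma_j$) together with definiteness of $\Psi - \delta$ to kill the two remaining pieces. The treatment of the trace part is essentially identical to the paper's (there $\tr N = 0$ gives $c = \tfrac{1}{3}\tr[(\Psi-\delta)S]$, which vanishes automatically since $\Psi-\delta$ is symmetric and $S$ skew, matching your $\alpha = 0$).

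Where you genuinely diverge is the antisymmetric part. The paper shows that the vanishing of the skew part of $N$ is equivalent to the anticommutation relation $(\Psi-\delta)S + S(\Psi-\delta) = 0$ and then runs an eigenvector argument: an eigenvector $w$ of $\Psi-\delta$ in $(\ker S)^\perp$ would have $S(w)$ an eigenvector with eigenvalue of the opposite sign, contradicting definiteness. You instead use the rank-3-specific parametrisation $\beta_{jk} = \epsilon_{jkm}b^m$ and the $\epsilon\epsilon$-identity to rewrite the same condition as the linear equation $(\Psi + 2\delta)b = 0$, and then deduce $b = 0$ from the elementary eigenvalue bounds $-2 < \psi_i < 1$. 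These are the same fact dressed differently: for a negative definite symmetric $3\times 3$ matrix $M$, your $M - (\tr M)\Id$ has eigenvalues $-\sum_{j\neq i}\lambda_j > 0$, which is exactly what the paper's contradiction is extracting. Your version is more explicitly computational and tied to $\dim E = 3$, while the paper's spectral argument is dimension-agnostic; both are valid and use the negative-definiteness in an essential way, so this is a matter of taste rather than substance.
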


\begin{proof}
Differentiating $F_A = (\Psi-\delta)\Sigma_A$ shows that the infinitesimal change $\delta \Sigma_i = \sigma_i$ in $\Sigma_i$ satisfies
\begin{equation}
(\diff_A a)^i 
	= 
		\phi^{ij}\Sigma_j
		+
		\left(\Psi^{ij} - \delta^{ij}\right)\sigma_j.
\label{da-self-dual}
\end{equation}
We will show that $\sigma_i^+=0$ from which it follows that $\diff_A^+a = \phi\Sigma$ and so $\diff_Aa - \phi\Sigma = \diff_A^-a$ as claimed.

We use the orthonormal frame $\Sigma_i$ of $\Lambda^+_A$ to write $\sigma_i^+ = Z_i^{\phantom{i}j}\Sigma_j$ for some 3-by-3 matrix-valued function $Z$. Similarly we write $(\diff_A^+a)^i = N^{ij}\Sigma_j$. Taking the self-dual part of \eqref{da-self-dual} gives
\begin{equation}\label{N=phi+YZ}
N^{ij} = \phi^{ij} + (\Psi^{ik} - \delta^{ik})Z_{k}^{\phantom{k}j}.
\end{equation}
We will use this equation to prove that $Z=0$.
 
By orthonormality of the $\Sigma_i$,
\[
\Sigma_i \wedge \Sigma_j = \frac{1}{3}(\Sigma_k \wedge \Sigma_k)\delta_{ij}.
\]
Differentiating this implies that the symmetric trace-free part of $Z$ vanishes. Meanwhile, the skew-symmetric and trace parts of $N$ vanish. The fact that the skew part is zero is precisely the second gauge condition~\eqref{Coulomb-gauge}; meanwhile taking $\diff_A$ of the first gauge condition~\eqref{spin-gauge} and using $\diff_A \Sigma_A=0$ shows that $\tr N =0$. 

Write $Z = S+c\Id$ where $S$ is skew-symmetric. Taking the trace of~\eqref{N=phi+YZ}, and using $\tr N=0=\tr \phi$, gives $c = \frac{1}{3}\tr((\Psi-\delta)S)$ so to prove $Z=0$ it suffices to show that $S=0$. Next, since $N$ is symmetric,~\eqref{N=phi+YZ} implies that $(\Psi - \delta)S$ is also symmetric or, equivalently, that
\begin{equation}\label{S-Y-anticommute}
(\Psi-\delta)^{-1}S(\Psi -\delta)+S =0.
\end{equation}
Equation~\eqref{S-Y-anticommute} shows that $\ker S$ is perseved by $(\Psi - \delta)$. Since $\Psi- \delta$ is symmetric, it also preserves the orthogonal complement $(\ker S)^\perp$. If this complement is non-trivial we can find $w \in (\ker S)^\perp$ which is an eigenvector of $\Psi- \delta$ with eigenvalue $\lambda <0$ (since $\Psi - \delta <0$). Applying~\eqref{S-Y-anticommute} to $w$ shows that $(\Psi -\delta) S(w) =  - \lambda S(w)$. This means that $S(w)$ is an eigenvector of $\Psi-\delta$ with a positive eigenvalue, but this contradicts the fact that $\Psi-\delta < 0$. This forces $S(w)=0$ which in turn contradicts $w \in( \ker S)^\perp$. So $S$ vanishes as claimed.
\end{proof}

We can now prove Proposition~\ref{effect-of-gauge}.

\begin{proof}[Proof of Proposition~\ref{effect-of-gauge}.]
Lemma~\ref{integral-condition} shows that when $\delta A = a$ preserves the torsion-free condition then $h_A(a,a)=0$, i.e.\
\[
\int_M \left[P_{ij}\left(
(\diff_A a)^j - \phi^{ij}\Sigma_j\right)
\wedge (\diff_A a)^i
- \epsilon^i_{\phantom{i}jk}a^j \wedge a^k \wedge \Sigma_i\right]
	=
		0
\]
Now, when $a$ is fully gauge fixed, Lemma~\ref{dminus} shows that $\diff_A -\phi\Sigma=\diff_A^-a$. Meanwhile, by Lemma~\ref{star-lemma}, the second term here is
\[
-
\epsilon^i_{\phantom{i}jk}a^j \wedge a^k \wedge \Sigma_i
	=
		a^j \wedge * a^j
	=
		|a|^2\mu_A.
\] 
So
\[
\int_M 
	\left[
	P_{ij} (\diff_A a^-)^i\wedge (\diff_Aa^{-})^j
	+ 
	|a|^2 \mu_A
	\right]
		=
			0.
\]
For anti-self-dual 2-forms $\theta,\chi$ we have $\theta \wedge \chi = - \langle \theta, \chi \rangle \mu_A$ and so we find
\[
\int_M \left[ 
	- P_{ij}
	\langle (\diff_A^-a)^i, (\diff_A^-a)^j\rangle
	+
	|a|^2\right] \mu_A
		=
			0.
\]
Since $P<0$ both terms of the integrand are pointwise non-negative and so both must vanish, giving $a=0$.
\end{proof}

We next show that our gauge-fixing condition gives a complement to the infinitesimal gauge action $R_A$ at $A$. Recalling the formula for this action (Lemma~\ref{infinitesimal-gauge-action}) set
\begin{align*}
V_A & = \{ \diff_A u + \iota_v F_A : u \in W^{2,2}(M,E),\, v \in W^{2,1}(M,TM)\} \\
W_A & = \{ a\in W^{2,1}(M,\Lambda^1\otimes E) : a\text{ is fully gauge fixed with respect to }A\}.
\end{align*}

\begin{proposition}\label{gauge-fixing}
Let $A$ be a torsion-free asymptotically-hyperbolic definite connection. There is a direct sum decomposition: 
\[
W^{2,1}(M,\Lambda^1 \otimes E) = V_A \oplus W_A.
\]
\end{proposition}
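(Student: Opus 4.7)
The plan is to split the proof into establishing $V_A \cap W_A = 0$ and $V_A + W_A = W^{2,1}$ separately, and to reduce the surjective sum to inverting a single second-order $0$-elliptic operator via Theorem~\ref{mazzeo-fredholm}.

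The intersection $V_A \cap W_A = 0$ is quick. Suppose $a = R_A(u,v) \in V_A \cap W_A$. Since the gauge group preserves the torsion-free condition, the linearisation satisfies $D_A \circ R_A = 0$, so $D_A a = 0$; since $a$ is also fully gauge fixed and lies in $W^{2,1}$, Proposition~\ref{effect-of-gauge} forces $a = 0$.

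For the surjective sum, the first step is the algebraic solvability of the vertical gauge condition. Using $\Sigma_i \wedge \Sigma_j = 2\delta_{ij}\mu_A$ together with the symmetry of $\Psi$, a short computation yields the identity
\[
\Sigma_i \wedge \iota_w F_A^i \;=\; \mathrm{tr}(\Psi - \delta)\,\iota_w\mu_A \;=\; -3\,\iota_w\mu_A,
\]
the off-diagonal $\Psi$-contributions cancelling pairwise. Hence for any $u$ and any target $a$, the condition $\Sigma_i \wedge (a - \diff_A u - \iota_v F_A)^i = 0$ uniquely determines $v = v(u,a)$ as an algebraic, first-order expression in $a - \diff_A u$. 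Substituting this back and using Lemma~\ref{star-lemma} to rewrite horizontal gauge as $\diff_A^*(\cdot)=0$, the problem reduces to solving
\[
\mathcal L u \;:=\; \diff_A^* \diff_A u + \diff_A^*\bigl(\iota_{v(u,0)} F_A\bigr) \;=\; \diff_A^* a - \diff_A^*\bigl(\iota_{v(0,a)} F_A\bigr)
\]
for $u \in W^{2,2}(M,E)$, where $\mathcal L$ is second-order with leading part a modification of the twisted Bochner Laplacian on $E$.

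The second step applies the $0$-calculus. I would verify $\mathcal L$ is $0$-elliptic, compute its indicial roots using the asymptotically hyperbolic expansion of $A$ from the previous subsection, and show that its normal operator on $\H^4$ with the model definite connection is invertible on weighted $L^2$. Theorem~\ref{mazzeo-fredholm} then yields Fredholm mapping properties on weighted Sobolev spaces. Triviality of $\ker \mathcal L$ at the relevant weight follows from Step~1: a kernel element $u$ produces $R_A(u, v(u,0)) \in V_A \cap W_A = 0$, and fibrewise injectivity of $\iota_{(\cdot)} F_A$ (guaranteed by definiteness of $A$), combined with the absence of $L^2$ parallel sections of $E$ on asymptotically hyperbolic $M$, forces $u = 0$. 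A parallel analysis of the formal adjoint $\mathcal L^*$ kills the cokernel. The hardest part I anticipate is the normal-operator invertibility on $\H^4$: one has to unravel $\mathcal L$ explicitly in the model and check that the algebraic perturbation obtained by eliminating $v$ neither destroys the $0$-ellipticity nor shifts the indicial roots into a bad range. The rest will be standard $0$-calculus bookkeeping.
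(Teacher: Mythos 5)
Your overall strategy matches the paper's: eliminate $v$ algebraically using $p\circ q$ (your computation of $\Sigma_i\wedge\iota_wF^i=-3\iota_w\mu_A$ is correct and is essentially the paper's claim that $p\circ q$ is a multiple of the metric isomorphism) and reduce the decomposition to showing that the second-order $0$-elliptic operator $\mathcal L=\diff_A^*\Pi_A\diff_A\colon W^{2,2}(M,E)\to L^2(M,E)$ is an isomorphism, with $\Pi_A$ the projection onto $\ker p$ along $\im q$. Your Step~1 observation that $V_A\cap W_A=0$ follows from Proposition~\ref{effect-of-gauge} together with $D_A\circ R_A=0$ is valid, and it is in fact a clean way to deduce that $\mathcal L u=0$ implies $\Pi_A\diff_A u=0$ (this sidesteps the issue that $\Pi_A$, unlike the model $\Pi_C$, is not self-adjoint). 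However, two steps are genuinely incomplete.

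First, the passage from $\Pi_A\diff_A u=0$ to $u=0$. You cite ``fibrewise injectivity of $\iota_{(\cdot)}F_A$'' and ``absence of $L^2$ parallel sections of $E$,'' but these facts alone do not close the argument: from $\Pi_A\diff_A u=0$ you get only that $\diff_A u=\iota_vF_A$ for some vector field $v$, and you still need $v=0$. The missing ingredient is a geometric interpretation of $v$: the gauge transformation with vertical part $-u$ and horizontal part $v$ fixes $A$, so by equivariance of $A\mapsto g_A$ the vector field $v$ is Killing for $g_A$. Since $|v|\lesssim|\diff_A u|$ and $\diff_A u\in L^2$, one has $v\in L^2$, and the Bochner formula $\tfrac12\Delta|v|^2=|\nabla v|^2-\Ric(v,v)=|\nabla v|^2+3|v|^2$ together with the maximum principle forces $v=0$. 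Only then does $\diff_A u=0$, after which the absence of $L^2$ parallel sections gives $u=0$. Without the Killing-field step there is no route from $\Pi_A\diff_A u=0$ to $u=0$.

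Second, surjectivity. ``A parallel analysis of the formal adjoint $\mathcal L^*$'' will not run: $\mathcal L^*=\diff_A^*\Pi_A^*\diff_A$, and $\Pi_A^*$ projects onto $(\im q)^\perp$ along $(\ker p)^\perp$, which has no interpretation as a gauge-fixing condition, so neither Proposition~\ref{effect-of-gauge} nor the Killing-field argument transfers. The paper instead computes the index by a deformation: set $F_t^i=(t\Psi^{ij}-\delta^{ij})\Sigma_j$ and let $\Pi_t$ project onto $\ker p$ along $\im q_t$; since $\Psi=O(\rho)$ the normal operator of $D_t=\diff_A^*\Pi_t\diff_A$ is the same $\diff_C^*\Pi_C\diff_C$ for all $t$, so all $D_t$ are Fredholm with the same index, and $D_0$ is formally self-adjoint (because $\Pi_0$ is the orthogonal projection), giving index zero. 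Combined with injectivity this yields surjectivity. Your proposal needs this index computation, or a replacement for it; as written the surjective half of the sum is not established.
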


We break the proof of this up into a series of Lemmas. This result is the asymptotically hyperbolic analogue of Proposition~3.6 from \cite{FKS}, which deals with the compact case. Accordingly we will go quickly over the parts which are proved in detail there and focus here on the parts of the proof which are new, and which are based on the $0$-calculus. 

To begin, recall the definition~\eqref{J-definition} of the triple of almost quaternionic structures $J_i$, defined (locally at least) by a definite connection. We write
\[
p \colon \Lambda^1 \otimes E \to \Lambda^1, 
\quad p(a) = J_ia^i
\]
Whilst the individual $J_i$ are only defined locally, $p$ is globally defined. A section $a$ is in vertical gauge with respect to $A$ if and only if $p(a)=0$. 

Next, write
\[
q \colon TM \to \Lambda^1\otimes E,
\quad
q(v) = \iota_v F_A
\]
To prove Proposition~\ref{gauge-fixing} we must show that for any $a\in W^{2,1}(M,\Lambda^1\otimes E)$ there is a unique $u \in W^{2,2}(M,E)$ and $v \in W^{2,1}(M,TM)$ solving the pair of equations
\begin{align}
p\left( a + \diff_A u + q(v) \right)
  &=
    0,\label{GF1}\\
\diff_A^*\left( a + \diff_A u + q(v) \right)
  &=
    0.\label{GF2}
\end{align}
One checks that $p\circ q = \colon TM \to \Lambda^1$ is equal to a non-zero multiple of the metric isomorphism between vectors and covectors (see Lemma~3.7 of \cite{FKS} for details). Then $v$ can be recovered from $a$ and $u$ via~\eqref{GF1}:
\[
v = - (p\circ q)^{-1}\circ p \left( \diff_A u +a\right)
\]
From here one sees that Proposition~\ref{gauge-fixing} is equivalent to the assertion that the following second-order equation has a unique solution  $u \in W^{2,2}$:
\begin{equation}
\diff_A^* \Pi_A \diff_A u =  - \diff_A^* \Pi_A a, 
\label{GF-full} 
\end{equation}
where 
\[
\Pi_A = 1 - q \circ(p\circ q)^{-1} \circ p
\]
is the projection operator with $\im \Pi_A= \ker p$ and $\ker \Pi_A = \im q$. 

We will solve~\eqref{GF-full} via the theory of $0$-elliptic operators. Lemma~3.9 of~\cite{FKS} shows that $\diff_A^*\Pi_A \diff_A$ is elliptic at interior points of $M$. Meanwhile, the normal operator of $\diff_A^*\Pi_A\diff_A$ is the analogous operator determined by the Levi-Civita connection acting on the bundle of self-dual 2-forms $\Lambda^+ \to \H^4$. This follows from the reasoning in Remark~\ref{geometric-normal-operators}. The point is that the torsion-free connection $A$ is identified with the Levi-Civita connection of $g_A$ on $\Lambda^+_A$. Since $g_A$ is asymptotically hyperbolic, in the limit used in defining the normal operator the connection $A$ converges to the Levi-Civita connection of $\Lambda^+ \to \H^4$. 

We now analyse this normal operator. We write $C$ for the Levi-Civita connection in $\Lambda^+ \to \H^4$ and denote the corresponding operator by $\diff^*_C\Pi_C\diff_C$.

\begin{lemma}
The normal operator $\diff_C^*\Pi_C\diff_C$ has the following properties:
\begin{itemize}
\item
Its indicial roots are $-1$ and $4$.
\item
The map $\diff_C^*\Pi_C \diff_C \colon W^{2,2}(\H^4, \Lambda^+) \to L^2(\H^4,\Lambda^+)$ is an isomorphism.
\end{itemize}
\end{lemma}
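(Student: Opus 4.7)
The plan is to handle the two claims separately. The indicial-root computation will reduce, by $\SO(3)$-equivariance along the boundary, to a scalar polynomial calculation; Fredholmness then follows directly from Theorem~\ref{mazzeo-fredholm}; and the isomorphism reduces (via formal self-adjointness) to showing there is no $L^2$ kernel, which I would approach through a Weitzenb\"ock identity.

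First I would verify that $\Pi_C$ is fibrewise self-adjoint. Under $\SO(4)$, the fibre $\Lambda^1 \otimes \Lambda^+$ decomposes as $(3/2,1/2) \oplus (1/2,1/2)$, of dimensions $8$ and $4$. Comparing the formulas defining $p$ and $q$, one checks that $\ker p$ is precisely the ``twistor'' summand $(3/2,1/2)$ while $\im q$ is the ``trace'' summand $(1/2,1/2)$; these are orthogonal, so $\Pi_C = 1 - q(p\circ q)^{-1} p$ coincides with the fibrewise orthogonal projection onto $\ker p$. In particular $\diff_C^*\Pi_C\diff_C$ is formally self-adjoint.

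For the indicial roots I would work in the upper half-space model of $\H^4$ and apply $\diff_C^*\Pi_C\diff_C$ to the ansatz $y^\lambda \omega$, where $\omega$ is a constant self-dual 2-form in an adapted orthonormal frame. $\SO(3)$-equivariance of the boundary symmetries forces $I(D)(\lambda)$ to be scalar, and formal self-adjointness forces $I(D)(\lambda) = I(D)(3-\lambda)$, so $I(D)(\lambda) = \lambda(\lambda-3) + c$ for some constant $c$. A short explicit calculation yields $c = -4$ and hence indicial roots $\{-1, 4\}$. With these roots constant along the boundary, $\nu = 3/2 \in (-1,4)$, and formal self-adjointness, Theorem~\ref{mazzeo-fredholm} implies that $\diff_C^*\Pi_C\diff_C \colon W^{2,2}(\H^4,\Lambda^+) \to L^2(\H^4,\Lambda^+)$ is Fredholm of index zero, so the lemma reduces to showing triviality of the $L^2$ kernel.

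This last step I expect to be the main obstacle. Suppose $u \in W^{2,2}$ satisfies $\diff_C^*\Pi_C\diff_C u = 0$. A standard cut-off argument, valid because $u$ and $\nabla_C u$ lie in $L^2$ on the complete manifold $\H^4$, justifies integration by parts to give $\int_{\H^4} |\Pi_C\diff_C u|^2 = 0$, so $\Pi_C\diff_C u = 0$; i.e.\ $u$ is a twistor self-dual 2-form on $\H^4$. My proposed approach is a Weitzenb\"ock identity of the form $(\Pi_C\diff_C)^*(\Pi_C\diff_C) = \nabla_C^*\nabla_C - \mathcal{R}$, where $\mathcal R$ is an explicit curvature endomorphism determined on $\H^4$ by $\Rm_+ = -\Id$ and the constant sectional curvature $-1$. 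Combined with a sharp lower bound on the $L^2$-spectrum of $\nabla_C^*\nabla_C$ on $\Lambda^+ \to \H^4$ (itself controlled by an indicial analysis), this should force $u = 0$. As an alternative, one could exploit the conformal identification $\H^4 \cong S^4 \setminus \{*\}$ to reduce to twistor self-dual 2-forms on $S^4$ vanishing to sufficiently high order at the removed point, which finite-dimensionality and the explicit Penrose-type structure of such forms would show must vanish.
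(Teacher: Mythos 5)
Your observation that $\ker p$ and $\im q$ are orthogonal in the hyperbolic model, so that $\Pi_C$ is fibrewise self-adjoint and the operator factors as $(\Pi_C\diff_C)^*(\Pi_C\diff_C)$, matches the paper and is the right first move. Your indicial-root argument takes a different route from the paper's: you apply the operator to an ansatz $y^\lambda\omega$ and use equivariance plus adjoint symmetry to pin down the quadratic, whereas the paper identifies $\ker(\Pi_C\diff_C)$ with $\{\iota_v F_C : v \text{ a Killing field}\}$, reads off the root $-1$ from the growth rate $|v|\sim\rho^{-1}$ of hyperbolic Killing fields, and then gets $4$ by the adjoint-symmetry $\lambda\mapsto 3-\lambda$. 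Both approaches land on $\{-1,4\}$; yours is more computational, the paper's more geometric.

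The genuine gap is in your second bullet. You invoke Theorem~\ref{mazzeo-fredholm} to conclude that $\diff_C^*\Pi_C\diff_C\colon W^{2,2}(\H^4)\to L^2(\H^4)$ is Fredholm of index zero, and then argue only injectivity. But that theorem has, as one of its hypotheses, invertibility of the normal operator of the operator to which you apply it. The normal operator of a geometric $0$-differential operator on $\H^4$ is, by Remark~\ref{geometric-normal-operators}, the very same operator on $\H^4$; so the hypothesis you would need to verify is exactly the invertibility statement you are trying to prove. The appeal is circular, and the ``Fredholm of index zero'' reduction does not come for free. This is precisely why the paper cannot stop at injectivity: after showing $\ker(\diff_C^*\Pi_C\diff_C)\cap L^2=0$ via the Killing-field argument, it establishes surjectivity directly by an explicit coercive estimate $\|u\|_{W^{2,2}}\leq C\|\diff_C^*\Pi_C\diff_C u\|_{L^2}$. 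That estimate is obtained by computing the operator in half-space coordinates and exhibiting it as $(\diff^*\diff+4)u + Ru$ with $\|Ru\|_{L^2}^2\leq\frac12\|\diff u\|_{L^2}^2$, and it gives closed range; combined with self-adjointness and injectivity this yields surjectivity without ever needing a prior Fredholm statement. Your proposed Weitzenb\"ock or conformal-compactification alternatives might be able to supply the missing closed-range (or a priori estimate) step, but as written they are only sketches, and in particular the needed sharp lower bound on the $L^2$-spectrum of $\nabla_C^*\nabla_C$ on $\Lambda^+\to\H^4$ is asserted rather than proved. As it stands, the surjectivity half of the lemma is not established.
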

\begin{proof}

Let $u\in \ker \Pi_C\diff_C$, so that there is a vector field $v$ for which $\diff_Cu= \iota_v F_C$. The $C$-horizontal lift $\hat{v}$ of $v$ to $\Lambda^+$ has an infinitesimal action on $C$ which is equal to $\iota_vF_C$ and so to $\diff_C u$, which is pure \emph{vertical} gauge. This means that the metric $g_C$ doesn't change, in other words that $v$ is a Killing field for the hyperbolic metric. The Killing fields of $\H^4$ all have the property that $|v|\sim \rho^{-1}$ (where $\rho$ is a boundary defining function) and so $\Pi_C\diff_C$ has a single indicial root, $-1$. 

Next one checks that, in this model case, $\ker p$ and $\im q$ are orthogonal, so that $\Pi_C$ is self-adjoint. It follows that 
\[
\diff^*_C \Pi_C \diff_C
  =
    \left(\Pi_C\diff_C\right)^*(\Pi_C \diff_C)
\]
and so has indicial roots $-1, 4$ (by the symmetry of indicial roots for adjoints around the value $3$). 

The relation with Killing fields shows that $\diff^*_C\Pi_C \diff_C \colon W^{2,2} \to L^2$ is also injective: if $u \in W^{2,2}$ is in the kernel of $\diff^*_C\Pi_C\diff_C$ then  $0= \left\langle \diff_C^*\Pi_C\diff_C u , u  \right\rangle_{L^2} = \| \Pi_C\diff_C u\|^2_{L^2}$ and so $\Pi_C\diff_C u = 0$. This means that $\diff_C u = \iota_vF_C$ for some Killing field $v$, but there are no Killing fields in $L^2$. So $\diff_C u =0$ meaning $u$ is parallel. But since $u \to 0 $ at infinity, the only possibility is that $u =0$ identically. 

To complete the proof we will show that there is a constant $C$ such that for all $u \in W^{2,2}$, 
\begin{equation}
\| u \|_{W^{2,2}} \leq C \| \diff_C^*\Pi_C\diff_C u\|_{L^{2}}. 
\label{coercive-estimate}  
\end{equation}
From here it follows that the range of $\diff_C^*\Pi_C\diff_C$ is closed. Since it is self-adjoint and injective, if then follows that it is also surjective.

The calculations are long, but not especially difficult and so we suppress a lot of the details. We use half-space coordinates $\rho >0$ and $y^1,y^2,y^3 \in \R$, in which the hyperbolic metric is $g_C = \rho^{-1}(\diff \rho^2 + \diff y^2)$. We use $\alpha^0 = \rho^{-1}\diff\rho$ and $\alpha^i = \rho^{-1} \diff y^i$ as an orthonormal coframe, and $\alpha^{0123}$ as the volume form, to fix the orientation. We trivialise $\Lambda^+$ with the orthonormal frame 
\[
e_1 = \frac{1}{\sqrt{2}}\left(\alpha^{01}+\alpha^{23}\right)
\quad
e_2 = \frac{1}{\sqrt{2}}\left(\alpha^{02}+\alpha^{31}\right)
\quad
e_3 = \frac{1}{\sqrt{2}}\left(\alpha^{03}+\alpha^{12}\right)
\]
We write sections of $\Lambda^+$ as $u = u^i e_i$ and so forth. One checks that the Levi-Civita connection is given in this frame by
\[
\diff_C u
  = 
    \diff u
    + 
    \begin{pmatrix}
    0 & \alpha^3 &  -\alpha^2  \\
    -\alpha^3 & 0 & \alpha^1 \\
    -\alpha^2 & -\alpha^1 & 0
    \end{pmatrix}
    \begin{pmatrix} u^1\\u^2\\u^3\end{pmatrix}
\]
From here one can compute the curvature 2-forms and hence the corresponding triple of almost quaternion structures, which are given by
\[
\begin{array}{l|l|l}
J_1(\alpha^0) = \alpha^1 &
  J_2(\alpha^0) =  \alpha^2 &
      J_3(\alpha^0) =  \alpha^3\\
J_1(\alpha^1) = -\alpha^0 &  
  J_2(\alpha^1) = -\alpha^3 &
    J_3(\alpha^1) =  \alpha^2\\
J_1(\alpha^2) =   \alpha^3 &
  J_2(\alpha^2) = -\alpha^0 &
     J_3(\alpha^2) = -\alpha^1 \\
J_1(\alpha^3) = -\alpha^2 &
  J_2(\alpha^3) =  \alpha^1&
    J_3(\alpha^3) = -\alpha^0
\end{array}
\]
One then checks that, for $a = a^i \otimes e_i$,  
\[
\Pi_C(a)
  =
   \frac{1}{3}
   \begin{pmatrix}
    2a^1 + J_3a^2 - J_2a^3\\
    2a^2 + J_1a^3 - J_3a^1\\
    2a^3 + J_2a^1 - J_1a^2  
    \end{pmatrix}
\]
(From here one can verify directly that $\Pi_C^*= \Pi_C$.) This then leads to the following formula for $\Pi_C\diff_C$:
\[
\Pi_C \diff_C (u)
  =
    \frac{1}{3}
    \begin{pmatrix}
      2(\diff u^1 + u^1\alpha^0) 
      + 
      (J_3\diff u^2 + u^2 \alpha_3) 
      - 
      (J_2\diff u^3 + u^3 \alpha_2)\\
      2(\diff u^2 + u^2\alpha^0) 
      + 
      (J_1\diff u^3 + u^3 \alpha_1) 
      - 
      (J_3\diff u^1 + u^1 \alpha_3)\\
      2(\diff u^3 + u^3\alpha^0) 
      + 
      (J_2\diff u^1 + u^1 \alpha_2) 
      - 
      (J_1\diff u^2 + u^2 \alpha_1)
      \end{pmatrix}  
\]
(And from here one can verify directly that $\Pi_C\diff_C$ has $-1$ as its only indicial root.) Further computations lead to
\begin{equation}\label{GF-normal-op}
\frac{3}{2} \diff^*_C \Pi_C\diff_C(u)
=
(\diff^* \diff + 4)u
+
Ru
\end{equation}
where
\[
Ru=
\frac{1}{2}
\begin{pmatrix}
  0 & - \rho \del_3 & \rho\del_2 \\
  \rho \del_3 & 0 & - \rho \del_1 \\
  -\rho \del_2 & \rho \del_1 & 0  
  \end{pmatrix}
\begin{pmatrix} u^1\\u^2\\u^3\end{pmatrix}
\]
(Again, that this confirms our previous observation that the indicial roots are $-1, 4$.) 

We now reach the pay-off. One checks directly that $\| Ru\|_{L^2}^2 \leq \frac{1}{2}\|\diff u\|^2_{L^2}$.
Now from~\eqref{GF-normal-op}, we have
\begin{align*}
\frac{9}{4} \| \diff^*_C\Pi_C\diff_C(u)\|^2_{L^2}
  &\geq
    \| (\diff^*\diff + 4)u\|^2_{L^2} 
    - 
    2\| Ru\|_{L^2}\|(\diff^*\diff + 4)u\|_{L^2},\\
  & \geq  
    \frac{1}{2}\|(\diff^*\diff + 4)u\|^2_{L^2}
    -
    2\| R u\|^2_{L^2},\\
  & \geq 
    \frac{1}{2}\| \diff^*\diff u\|^2_{L^2}
    +
    3 \| \diff u\|^2_{L^2}
    +
    16 \|u\|^2_{L^2},\\
    &\geq
    c\|u\|^2_{W^{2,2}},
\end{align*}
where in the last line we used the fact that the hyperbolic Laplacian acting on functions is an isomorphism $\diff^*\diff \colon W^{2,2} \to L^2$. This establishes~\eqref{coercive-estimate}, completing the proof. 
\end{proof}

The $0$-calculus (see Theorem~\ref{mazzeo-fredholm} above) now gives the following corollary.

\begin{corollary}
Let $A$ be a torsion-free asymptotically-hyperbolic definite connection. Then $\diff_A^*\Pi_A\diff_A \colon W^{2,2}(M, E) \to L^2(M,E)$ is a Fredholm map.
\end{corollary}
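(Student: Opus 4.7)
The plan is to recognize the corollary as a direct application of Theorem~\ref{mazzeo-fredholm} to $D = \diff_A^*\Pi_A\diff_A$, using the preceding lemma as input. All the analytic heavy lifting has already been done there; what remains is to verify the three hypotheses of the theorem and to identify the correct weight $\nu$.

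First I would confirm that $D$ is a second-order $0$-differential operator on $E$. Under the isometry $\Sigma_A \colon E \to \Lambda^+_A$, the torsion-free connection $A$ is identified with the $g_A$-Levi-Civita connection on $\Lambda^+_A$. Since $g_A$ is asymptotically hyperbolic, this connection, the Hodge star for $g_A$, and the smooth bundle endomorphism $\Pi_A$ combine to give an operator whose coefficients are smooth up to the boundary when expressed in terms of $0$-vector fields, placing $D$ in the $0$-calculus.

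Next I would check $0$-ellipticity and the indicial-root hypothesis. Lemma~3.9 of \cite{FKS} shows $D$ is elliptic at interior points of $M$. Since $D$ is geometric in the sense of Remark~\ref{geometric-normal-operators}, its normal operator at every boundary point equals the corresponding operator $\diff_C^*\Pi_C\diff_C$ on $\H^4$; by the preceding lemma, this normal operator is elliptic on the interior of $\H^4$, which promotes interior ellipticity to $0$-ellipticity on all of $\overline{M}$. Remark~\ref{geometric-normal-operators} also forces the indicial roots to be constant along $X$, and the preceding lemma identifies them as $\{-1,4\}$, so the interval $(\lambda_1,\lambda_2) = (-1,4)$ contains no indicial root.

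Finally, with $n = 3$, I would take $\nu = n/2 = 3/2 \in (-1,4)$. The weight in the statement of Theorem~\ref{mazzeo-fredholm} is then $\nu - n/2 = 0$, so the invertibility of the normal operator established in the preceding lemma—namely $\diff_C^*\Pi_C\diff_C \colon W^{2,2}(\H^4,\Lambda^+) \to L^2(\H^4,\Lambda^+)$—is exactly what part of Theorem~\ref{mazzeo-fredholm}'s hypothesis demands. Applying part (1) of Theorem~\ref{mazzeo-fredholm} with $p=2$, $k=0$, $d=2$ yields Fredholmness of $D \colon W^{2,2}(M,E) \to L^2(M,E)$. I do not anticipate any serious obstacle: the argument is essentially bookkeeping (tracking the weight conventions, confirming smoothness up to the boundary, and placing $\nu = 3/2$ inside the indicial-root-free interval), since every non-trivial analytic input is already contained in the preceding lemma.
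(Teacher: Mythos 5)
Your proposal is correct and follows exactly the route the paper takes: the paper states the corollary as an immediate consequence of Theorem~\ref{mazzeo-fredholm} applied to $\diff_A^*\Pi_A\diff_A$, with the interior ellipticity supplied by Lemma~3.9 of \cite{FKS} and the normal operator, its indicial roots $\{-1,4\}$, and its invertibility $W^{2,2}\to L^2$ supplied by the preceding lemma. You have simply spelled out the bookkeeping (the identification of $N(D)$ via Remark~\ref{geometric-normal-operators}, constancy of indicial roots, and the choice $\nu = n/2 = 3/2$ so that the weight $\nu-n/2=0$ gives the unweighted spaces) that the paper leaves implicit.
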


With this in hand we can now complete the gauge-fixing argument.
\begin{proof}[Proof of Proposition~\ref{gauge-fixing}]
Following the discussion around equation~\eqref{GF-full}, to prove the result we must show that 
\[
\diff_A^*\Pi_A\diff_A \colon W^{2,2}(M,E) \to L^2(M,E)
\]
is an isomorphism. We already know that it is Fredholm. Moreover, this map is also injective. To see this note that if $u \in \ker \diff_A^*\Pi_A\diff_A$ then, just as in the model case of $\Lambda^+ \to \H^4$, it follows that $\diff_A u = \iota_v F_A$ for a Killing field $v$. But the fact that $\diff_A u \in L^{2}$ means that $v \in L^{2}$. Now we use the Bochner formula for a Killing field:
\[
\frac{1}{2}\Delta |v|^2 = |\nabla v|^2 - \Ric(v,v) = |\nabla v|^2 + 3 |v|^2
\]
(since $\Ric = - 3g$). Since $v \in L^2$, $|v|$ must attain a local maximum at an internal point of $M$. The maximum principle then tells us that $v=0$ at that point and so $v$ vanishes identically. As a consequence, $\diff_A u =0$, but the only parallel section of $E$ which is also in $L^2$ is $u=0$.

To complete the proof we will show that $\diff_A^*\Pi_A\diff_A$ has index zero. If it were self-adjoint, this would be automatic, but since in general $\Pi_A$ is not self-adjoint, $\diff_A^*\Pi_A\diff_A$ is not either. Instead, we connect $\Pi_A$ to a self-adjoint projection, using the same trick as in \cite{FKS}. Recall that $F^i = (\Psi^{ij} - \delta^{ij})\Sigma_j$. Consider the path $F^i_t = (t\Psi^{ij} - \delta^{ij})\Sigma_j$ of $E$-valued 2-forms. This gives a path of maps $q_t(v) = \iota_v F_t$ and so a path of projection operators, $\Pi_t = 1 - q_t \circ (p \circ q_t)^{-1} \circ p$, projecting onto $\ker p$ against $\im q_t$. We have $\Pi_1 = \Pi_A$ whilst $\Pi_0$ is genuinely self-adjoint, since $\im q_0$ is the orthogonal complement to $\ker p$. It is shown in \cite{FKS} that the corresponding operators $D_t = \diff_A^*\Pi_t \diff_A$ are elliptic at interior points. Meanwhile, the metric $g_A$ is asymptotically hyperbolic, which implies that $\Psi = O(\rho)$ (where $\rho$ is a boundary defining function). This shows that changing $t$ does not affect the normal operator of $D_t$, which is again $\diff_C^*\Pi_C\diff_C$. It follows from Theorem~\ref{mazzeo-fredholm} that $D_t \colon W^{2,2} \to L^2$ is a path of Fredholm operators and so has constant index. Since $D_0$ is self-adjoint, this common index must be zero. In particular, the index of $D_1 = \diff_A^*\Pi_A\diff_A$ is seen to be zero. 
\end{proof}

\begin{theorem}\label{connections-non-degenerate}
Let $A$ be a torsion-free asymptotically hyperbolic definite connection and let $\delta A =a$ be an infinitesimal deformation of $A$ which preserves the torsion-free condition, i.e.\ with $D_A(a)=0$, and which lies in $W^{2,1}$. Then $a$ is pure gauge, i.e.\ there exists $u \in W^{2,2}(M,E)$ and $v \in W^{2,1}(M,TM)$ such that $a = \diff_A u + \iota_v F_A$. In particular the infinitesimal change in the corresponding Einstein metric is given by a Lie derivative: $\delta (g_A) = \L_v(g_A)$.
\end{theorem}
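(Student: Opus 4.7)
The proof is essentially a synthesis of the two main technical results already in place: the gauge-fixing decomposition of Proposition~\ref{gauge-fixing} and the vanishing statement of Proposition~\ref{effect-of-gauge}. The plan is as follows.

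First I would use Proposition~\ref{gauge-fixing} to split $a = a_g + a_f$, where $a_g = \diff_A u + \iota_v F_A \in V_A$ is pure gauge (with $u \in W^{2,2}(M,E)$ and $v \in W^{2,1}(M,TM)$) and $a_f \in W_A$ is fully gauge fixed with respect to $A$. The next step, which is the conceptual heart of the argument, is to observe that the torsion operator $A \mapsto \diff_A \Sigma_A$ is equivariant under the gauge group $\G$. Since $A$ is torsion-free and the gauge flow preserves the torsion-free locus, differentiating along any one-parameter subgroup of $\G$ through $A$ gives $D_A \circ R_A = 0$. In particular $D_A(a_g) = D_A(\diff_A u + \iota_v F_A) = 0$. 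Combined with the hypothesis $D_A(a) = 0$, this yields $D_A(a_f) = 0$.

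Since $a_f \in W_A \subset W^{2,1}$ is fully gauge fixed and satisfies $D_A(a_f) = 0$, Proposition~\ref{effect-of-gauge} applies and forces $a_f = 0$. Hence $a = a_g = \diff_A u + \iota_v F_A$, proving the first claim of the theorem.

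For the final assertion on the metric, I would use the description of the gauge group from the short exact sequence~\eqref{G-extension}: $\G_V$ acts trivially on $g_A$ (vertical gauge transformations do not alter the induced metric), while the quotient $\Diff(M)$ acts by pullback. Under the splitting $\xi = u + v_A$ of Lemma~\ref{infinitesimal-gauge-action}, the vertical part $\diff_A u$ corresponds to an element of $\Lie(\G_V)$ and therefore contributes nothing to $\delta(g_A)$, while the horizontal part $\iota_v F_A$ corresponds to the horizontal lift of $v \in \Gamma(TM)$, whose action on $g_A$ is the Lie derivative. Therefore $\delta(g_A) = \L_v(g_A)$, as claimed.

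The main subtlety, if any, is to be sure that the decomposition from Proposition~\ref{gauge-fixing} is applicable to our given $a$: since $a \in W^{2,1}$ by hypothesis and the statement of Proposition~\ref{gauge-fixing} is exactly for $W^{2,1}(M,\Lambda^1 \otimes E)$, the regularity matches on the nose. The only other point worth verifying carefully is the identity $D_A \circ R_A = 0$, but this is a purely formal consequence of $\G$-equivariance of the torsion map together with the torsion-free condition at $A$, with no analytic content.
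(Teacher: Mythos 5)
Your proof is correct and follows the same route as the paper: decompose $a$ via Proposition~\ref{gauge-fixing} into a fully gauge-fixed piece and a pure-gauge piece, use gauge invariance of the torsion-free condition to transfer $D_A(a)=0$ to the gauge-fixed piece, and then apply Proposition~\ref{effect-of-gauge} to conclude that piece vanishes. Your additional discussion of why the vertical part $\diff_A u$ does not affect $g_A$ while the horizontal part gives $\L_v(g_A)$ is a correct and welcome elaboration of the final sentence, which the paper leaves implicit.
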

\begin{proof}
By Proposition~\ref{gauge-fixing} we can write $a = b + c$ where $b$ is in fully gauge fixed with respect to $A$ and $c = \diff_A u + \iota_vF_A$ for some $u,v$. The torsion-free condition $\diff_A \Sigma_A=0$ is gauge invariant and so $D_A(c)=0$. Since $D_A(a)=0$ we have that $D_A(b) = 0$ as well. Now Proposition~\ref{effect-of-gauge} gives that $b=0$ and so $a = c$ as required.
\end{proof}

\subsection{Completing the proof of non-degeneracy}\label{complete-the-proof}

We are now in a position to give the proof of the main result, Theorem~\ref{main-result}. \begin{proof}[Proof of Theorem~\ref{main-result}]
Let $(M,g)$ be an oriented Poincaré--Einstein 4-manifold with $\Rm_+ <0$ and let $h$ be an $L^2$-solution of $L_g(h)=0$. We will prove that $h=0$. (It is enough to consider $\Rm_+<0$; for the case $\Rm_-<0$ one simply reverses the orientation.)

Let $A$ denote the Levi-Civita connection of $g$ on $\Lambda^+$. The curvature condition $\Rm_+<0$ and the fact that $g$ is Einstein imply that $A$ is a torsion-free asymptotically-hyperbolic definite connection with $g_A=g$. We claim that there is an infinitesimal deformation $a \in W^{2,1}$ of $A$ for which $\delta(g_A) = h$ and $D_A(a)=0$ (i.e.\ $a$ preserves the torsion-free condition). 

The construction of $a$ follows the path laid out in \S4 of \cite{FKS}. We briefly summarise the argument here.  Let $\hat{g}(t)$ be a path of asymptotically hyperbolic metrics on $M$ with $\hat{g}(0) = g$ and $\hat{g}'(0) = h$. Choose a path of isometries $\hat{\Sigma}(t) \colon E \to \Lambda^+(\hat{g}(t))$ with $\hat\Sigma(0) = \Sigma_A$. Write $C(t)$ for the Levi-Civita connection of $\hat{g}(t)$ on $\Lambda^+(\hat{g}(t))$ and pull this back via $\hat{\Sigma}(t)$ to give a path $A(t) =\hat{\Sigma}(t)^*C(t)$ of $\SO(3)$-connections in $E$. We have $A(0) = A$. For small $t$, $A(t)$ is a definite connection and so determines a metric $g(t) = g_{A(t)}$. 

It is important to note that in general $g(t) \neq \hat{g}(t)$. At this point we invoke Proposition~\ref{gauge-fixing-does-job} which ensures that $D_g(h)=0$ (where $D_g$ is the linearisation of $g \mapsto \Ric(g)+3g$). Now, \S4 of \cite{FKS} proves that, because $D_g(h)=0$, we  have $g'(0) = \hat{g}'(0) = h$.  Write $A'(0) = a$; \S4 of \cite{FKS} also shows that $D_A(a)=0$. So $a$ is  the section of $\Lambda^1 \otimes E$ we are looking for. 

We now check that $a \in W^{2,1}$. Since $h \in L^2$ and $L_g(h)=0$ it follows from the fact that $L_g$ is $0$-elliptic that $h \in W^{2,2}$. (See for example Lemma~4.8 in \cite{Lee}.) This means that $C'(0) \in W^{2,1}$ and that we can also choose $\hat{\Sigma}$ so that $\hat{\Sigma}'(0) \in W^{2,1}$.  From here it follows that $A'(0) \in W^{2,1}$ as required.

Since $D_A(a)=0$ and $a \in W^{2,1}$, Theorem~\ref{connections-non-degenerate} now ensures that there is a vector field $v \in W^{2,1}$ on $M$ for which $h = \L_v(g)$. By Proposition~\ref{gauge-fixing-does-job}, $B_g(h)=0$ which, by the definition~\eqref{Bianchi-operator} of $B_g$, means that
\[
\div_g (\L_v(g)) + \frac{1}{2}\diff (\tr \L_vg)=0
\] 
Integrating against $v^\flat$ (the 1-form dual to $v$) we get
\[
\| \L_v g \|^2_{L^2} + \frac{1}{2} \| \diff^*(v^\flat) \|^2_{L^2} = 0
\] 
It follows that $h = \L_v g = 0$ and so $g$ is non-degenerate as required.
\end{proof}

\bibliographystyle{amsplain}
\bibliography{rigidity_4d_PE_bibliography}

\vspace{\baselineskip}

\noindent
\textsc{Joel Fine\\
Département de mathématique\\
Université libre de Bruxelles\\}
\href{mailto:joel.fine@ulb.be}{\tt{joel.fine@ulb.be}}

\end{document}